\numberwithin{equation}{section}
\theoremstyle{plain}
\newtheorem{Th}{Theorem}[section]
 \theoremstyle{definition}
\newtheorem{?}[Th]{Problem}
\newtheorem*{Ex(s)}{Spherical Example}
\newtheorem{Ex(t)}{Toroidal Example}
\newtheorem*{fact*}{Fact}
\newtheorem{Case}{Case}
\begin{document}
\title[Symplectic Structures with Non-Isomorphic Primitive Cohomology]{Symplectic Structures with Non-Isomorphic Primitive Cohomology on open 4-Manifolds}
\author{Matthew Gibson}
\address{\parbox{\linewidth}{Department of Mathematics, University of California,
Irvine, CA 92697, USA}}
\email{gibsonmd@uci.edu}
\author{Li-Sheng Tseng}
\address{\parbox{\linewidth}{Department of Mathematics, University of California,
Irvine, CA 92697, USA}}
\email{lstseng@math.uci.edu}
\author{Stefano Vidussi}
\address{\parbox{\linewidth}{Department of Mathematics, University of California,
Riverside, CA 92521, USA}}
\email{svidussi@ucr.edu}
\begin{abstract} We analyze four-dimensional symplectic manifolds of type $X=S^1 \times M^3$ where $M^3$ is an open $3$-manifold admitting inequivalent fibrations leading to inequivalent symplectic structures on $X$.  For the case where $M^3 \subset S^3$ is the complement of a $4$-component link constructed by McMullen-Taubes, we provide a general algorithm for computing the monodromy of the fibrations explicitly. We use this algorithm to show that certain inequivalent symplectic structures are distinguished by the dimensions of the primitive cohomologies of differential forms on $X$.  We also calculate the primitive cohomologies on $X$ for a class of open $3$-manifolds that are complements of a family of fibered graph links in $S^3$.  In this case, we show that there exist pairs of symplectic forms on $X$, arising from either equivalent or inequivalent pairs of fibrations on the link complement, that have different dimensions of the primitive cohomologies.

\end{abstract}
\maketitle
\section{Introduction}
\par There is a well-known class of symplectic 4-manifolds that is intrinsically related to Riemann surfaces and its mapping class group.  Given a finite type Riemann surface $\Sigma$ and an element $f:\Sigma \to \Sigma$ of its mapping class group, we can combine the information of $(\Sigma, f)$ into a three-dimensional manifold, the mapping torus $\pi : Y_f \to S^1$ with fiber $\Sigma$ and monodromy given by $f$.  We then take the product of $Y_f$ with a circle $S^1$ and obtain a symplectic 4-manifold $X=S^1 \times Y_f$ whose symplectic structure can be constructed as follows (see Section \ref{sec-prelim} for more detail).  Let $d\pi$ be the closed, non-degenerate 1-form associated to the projection map and $dt$ the 1-form of the $S^1$ factor in $X$. Together with a global two-form $\omega_\Sigma$ of $Y_f$, which restricts to the volume form on each fiber, we can take the symplectic form to be $\omega_f = dt\wedge d\pi+\omega_\Sigma$.  By construction, the data of this symplectic 4-manifold $(X, \omega_f)$ consists essentially of just $(\Sigma, f)$.  

Though simple in construction, this class of 4-manifolds has an interesting property.   We recall that a three-dimensional manifold can have different fibrations with homeomorphic fibers, with the monodromy given by different elements of the mapping class group, or even fibrations with non-homeomorphic fibers.  Of interest, the different fibrations would lead to different symplectic forms on $X$.  It is therefore an interesting question whether symplectic forms from different fibrations are related or not in some particular way.  In fact, it is known from the work of McMullen and Taubes \cite{McT} that the symplectic structures from different fibrations can be {\it inequivalent}. In this paper, following \cite{McT}, we say that two symplectic forms $(\omega_0, \omega_1)$ are {\it equivalent} if there is some combination of diffeomorphisms and smooth paths of symplectic forms interpolating between $\omega_0$ and $\omega_1$. Otherwise, we will call two symplectic forms {\it inequivalent}.

These 4-manifolds of course can be studied using tools from symplectic geometry.  From this perspective, it is interesting to ask whether symplectic techniques on 4-manifolds can shed some light on the relationships between elements of the mapping class group of Riemann surfaces, in the case of different fibrations with homeomorphic fibers, and their associated symplectic structures.  McMullen and Taubes used Seiberg-Witten invariants to demonstrate the inequivalence of two symplectic forms in their construction.  Here, we will consider a more algebraic tool related to cohomologies of primitive differential forms introduced by Tseng and Yau \cite{TsengII}.

In \cite{TsengII}, Tseng-Yau studied a new cohomological invariant for any symplectic manifold $(X, \omega)$ denoted by $PH_\pm^*(X,\omega)$ (see also \cite{TsengIII} for its underlying algebraic structure). Unlike the de Rham cohomology, the dimension of this symplectic cohomology is not an invariant of homeomorphism type and can depend on the symplectic structure. In fact, it was demonstrated explicitly in a 6-nilmanifold example that $PH_\pm^*(X,\omega)$ can jump in dimensions as the symplectic structure $\omega$ on $X$ varies \cite{TsengII}.  But prior to this work, as far as the authors are aware, no examples of dimension jumping were known for symplectic 4-manifolds.  We will explain below how the dimensions of $PH_\pm^2(X,\omega)$ effectively count the number of Jordan blocks of size at least two in the decomposition of $f^*-1: H^1(\Sigma)\to H^1(\Sigma)$. This key fact together with our explicit calculations of the monodromy allow us to distinguish different symplectic structures on 4-manifolds of type $X=S^1\times M^3$ arising from different fibrations for $M^3$.  As a first example, inspired by the construction of McMullen and Taubes in \cite{McT}, we find the following. 
\begin{Th}\label{thm-A}
There exists a fibered 3-manifold $M^3$ admitting two different fibrations with monodromy $f$ and $g$, yielding two associated symplectic structures on $X = S^1 \times M^3$ -- $(X, \omega_f)$ and $(X, \omega_{g})$ -- that can be distinguished by the primitive cohomologies. In particular, 
\begin{align*}
\dim PH^2_+(X,\omega_f)\neq \dim PH^2_+(X,\omega_{g}).
\end{align*}
\end{Th} 
%
The first example of inequivalent symplectic structures with distinct primitive cohomologies that we will describe in this paper is closely related to the construction of McMullen and Taubes where they showed the existence of a pair of inequivalent symplectic forms on a simply-connected 4-manifold starting from a 3-manifold $M^3$ that is a fibered link complement \cite{McT}. Their construction furnishes for us the fibration of $M^3$ with monodromy $f$ referenced in Theorem \ref{thm-A}. In \cite{Vidussi}, the third author found another fibration of $M^3$ with monodromy $g$ whose associated symplectic structure is inequivalent from that associated with McMullen-Taubes' fibration.  After carefully describing the monodromies $f$ and $g$, we are able to show that the symplectic 4-manifolds $(X, \omega_f)$ and $(X, \omega_{g})$ have distinct primitive cohomologies.

\par It turns out that our first example motivated by McMullen and Taubes is just the tip of the iceberg in terms of inequivalent symplectic structures having different primitive cohomologies. A large class of examples can be constructed from fibered $3$-manifolds that are $S^3$ complements of graph links and studied in detail by Eisenbud and Neumann \cite{Eisenbud}. In \cite{Graph_Link}, the third author constructed a family of 3-manifolds, denoted by $M^{(2n)}=S^3\backslash K^{(2n)}$ for any $n\in \mathbb{N}$, where $K^{(2n)} \subset S^3$ is a 2-component graph link. Its fibrations are encoded by a pair of integers $(m_1, m_2)$ satisfying certain conditions. It was shown in \cite{Graph_Link} that the associated symplectic $4$-manifold $X^{(n)}=S^1\times M^{(2n)}$ admits at least $n+1$ inequivalent symplectic structures. We have investigated the primitive cohomology of this family of $4$-manifolds and found that the dimension depends on the choice of the fibration $(m_1, m_2)$, and in fact, directly relates to the divisibility of the $m_i$'s by 3.
\begin{Th}\label{thm-B}
For $n\in \mathbb{N}$, let $(m_1,m_2)$ be coprime, representing a fibration of the graph link $M^{(2n)}=S^3\backslash K^{(2n)}$. By reversing the roles of $m_1$ and $m_2$ if necessary, we write $m_1= 3^kq$ with $\gcd(q,3)=1$ and assume $\gcd(3,m_2)=1$. Denote by $\omega_{(m_1, m_2)}$ the associated symplectic form on $X^{(n)}=S^1\times M^{(2n)}$.  The primitive cohomology is given by 
\begin{align*}
\dim PH_2^+(X^{(n)}, \omega_{(m_1, m_2)})=\left\{ \begin{array}{lc}b_2(X^{(n)})+ 3^{n+k-\lceil \frac{k}{2}\rceil}-3^k, &k\leq 2n-4\\b_2(X^{(n)})+2\cdot 3^k, &2n-3\leq k\leq 2n-2\\b_2(X^{(n)})+1,& k\geq 2n-1 \end{array}\right.
\end{align*}
where $b_2(X^{(n)}) = 3$ is the second Betti number.
\end{Th}
\par These 4-manifolds associated with graph links not only have primitive cohomologies that may vary with the symplectic structures but also  demonstrate other interesting properties worthy of further investigations. For instance, in addition to inequivalent symplectic structures possibly having different primitive cohomologies, we also found pairs of symplectic structures associated with {\it equivalent} 3-manifold fibrations having different primitive cohomologies as well. Similar to symplectic forms, we say for fibered 3-manifolds that two fibrations $(\pi_0, \pi_1)$ of $M^3\to S^1$ are {\it equivalent} if their corresponding one-forms $(d\pi_0, d\pi_1)$ are related by some combinations of diffeomorphisms and smooth paths of one-forms interpolating between them. Clearly, to distinguish equivalent versus inequivalent 3-manifold fibrations from the symplectic 4-manifold perspective would require more tools than just the dimension of the primitive cohomology.  Regarding this, it may be helpful to point out that the differential forms underlying $PH^*(X,\omega)$ are situated in an $A_3$-algebra \cite{TsengIII} and hence the cohomology has a ring structure and also Massey products different from those of $H^*(X)$. For the class of symplectic 4-manifolds $X=S^1 \times M^3$, where $M^3$ is the mapping torus with monodromy $f$, it would be insightful to relate the invariants coming from $PH^*(X, \omega_f)$ and all its products directly with the properties of the monodromy $f$ as an element of the mapping class group.  This can give a deeper link between the space of symplectic structures on $X$ with the mapping class groups of Riemann surfaces and their corresponding mapping tori. This paper represents a significant first step in this direction focusing on the implications of the cohomology group.
\par The structure of the paper is as follows. In Section \ref{sec-prelim}, we review the basic properties of $PH^*(X,\omega)$ and the de Rham cohomology of a fibered 3-manifold. In Section \ref{examples}, we discuss the details of the McMullen-Taubes fibrations \cite{McT} and use their work and that of \cite{Vidussi} to establish Theorem \ref{thm-A}. The explanation of our calculations is given in Section \ref{sec-construct} and the Appendix, where we provide the subtle details of our explicit construction of the monodromy of the McMullen-Taubes surface bundle. In Section \ref{sec-GL}, we prove Theorem \ref{thm-B}, where we investigate the symplectic structures and the primitive cohomologies of 4-manifolds constructed from the family of fibrations on graph links $K^{(2n)}$ introduced by the third author in \cite{Graph_Link}. These fibrations are determined by relatively prime pairs $(m_1, m_2)$.  Finally, we end in Section \ref{sec-D} with a discussion of our results.
\subsection*{Acknowledgments}
We would like to express our thanks to Yi Liu, Curtis McMullen, Yi Ni, Nick Salter, Chung-Jun Tsai, and Jesse Wolfson for helpful conversations.  Additionally, we thank the support of the Simons Collaboration Grant No. 636284 (L.S.T.) and No. 524230 (S.V.).  
\section{Preliminaries}\label{sec-prelim}
\subsection{De Rham and Primitive Cohomologies}\mbox{} \\
\par In this subsection, we briefly review the basics of the de Rham cohomology of surface bundles over a circle and also summarize the primitive cohomology studied in \cite{TsengIII,TsengII}, applying it to symplectic 4-manifolds associated to surface bundles. 
\par Let ${\widehat \Sigma_g}$ be a closed surface of genus $g$.
We endow ${\widehat \Sigma}_g$ a symplectic structure, and we can assume, up to isotopy, that any self-diffeomorphism ${\widehat f} \colon {\widehat \Sigma}_g \to {\widehat \Sigma}_g$ preserves the symplectic form. Next, we consider the mapping torus of ${\widehat f}$, the closed 3-manifold
${\widehat Y}_{\widehat f} ={\widehat \Sigma}_g \times[0,1]/ (x,1)\sim ({\widehat f}(x),0)$. It follows that ${\widehat Y}_{\widehat f}$ has a ${\widehat \Sigma}_g$-bundle structure over $S^1$ with the projection given by $\pi:{\widehat Y}_{\widehat f} \to S^1$, $\pi([x, t]) = t$. The associated map ${\widehat f}$ is called the {\it monodromy} of the bundle. We can endow the 4-manifold $S^1 \times {\widehat Y}_{\widehat f}$ with a symplectic structure by defining ${\widehat \omega} := dt \wedge d\pi + {\widehat \omega}_{\widehat \Sigma}$ where $dt$ is the volume form on the  $S^1$ factor and ${\widehat \omega}_{\widehat \Sigma}$ is a 2-form on ${\widehat Y}_{\widehat f}$ restricting to the volume form on ${\widehat \Sigma}$. 

Let $\Sigma_{g,n}={\widehat \Sigma}_g-\{y_1,\cdots, y_n\}$ be the surface of genus $g$ with $n$ points removed. When clear, the surface will simply be abbreviated by $\Sigma$. Moreover, when convenient, $P:=\{y_1,\cdots, y_n\}$ may be thought of as marked points. The symplectic form on ${\widehat \Sigma}_g$ restricts to a symplectic form on $\Sigma$.   

Let ${\widehat f} \colon {\widehat \Sigma}_g \to {\widehat \Sigma}_g$  be any symplectic diffeomorphism preserving $P$ setwise. This restricts to a symplectic diffeomorphism $f \colon \Sigma \to \Sigma$. The 3-dimensional mapping torus $Y_f =\Sigma\times[0,1]/ (x,1)\sim (f(x),0)$ is an open submanifold of ${\widehat Y}_{\widehat f}$, obtained by removing  a collection of curves transverse to the fibers. Denoting, with slight abuse of notation, the new fibration by $\pi \colon Y_f \to S^1$ as well,  its  de Rham cohomology is determined by the  Wang exact sequence
\begin{equation*}
\begin{tikzcd}
\cdots\arrow{r}&H^0(\Sigma)\arrow{r}&H^1(Y_f)\arrow{r}&H^1(\Sigma)\arrow{r}{f^*-1}&H^1(\Sigma)\arrow{r}&H^2(Y_f)\arrow{r}&\cdots
\end{tikzcd}
\end{equation*}
This sequence yields
\begin{align*}
H^0(Y_f)&= \mathbb{R},\\
H^1(Y_f)&=\ker(f^*-1: H^1(\Sigma) \to H^1(\Sigma))\oplus \langle d\pi\rangle,\\
H^2(Y_f)&=\langle d\pi\rangle\wedge\text{coker}(f^*-1: H^1(\Sigma)\to H^1(\Sigma)),\\
H^3(Y_f)&= 0.
\end{align*}

For $X=S^1\times Y_f$, the restriction of the symplectic form on $S^1 \times {\widehat Y}_{\widehat f}$ to $X$ 
yields the symplectic form $\omega = dt\wedge d\pi + \omega_\Sigma$, where $\omega_\Sigma$ (by abuse of notation) is a closed 2-form on $Y_f$ that restricts to the symplectic form on each fiber.  The Kunneth formula also easily shows
\begin{align*}
H^0(X)&=\mathbb{R},\\
H^1(X)&=\langle dt, d\pi\rangle \oplus \ker(f^*-1: H^1(\Sigma) \to H^1(\Sigma)),\\
H^2(X)&=\langle dt\wedge d\pi\rangle \oplus d\pi\wedge\text{coker}(f^*-1: H^1(\Sigma)\to H^1(\Sigma))\oplus dt\wedge \ker(f^*-1: H^1(\Sigma) \to H^1(\Sigma)),\\
H^3(X)&=\langle dt\wedge d\pi\rangle\wedge \text{coker}(f^*-1: H^1(\Sigma)\to H^1(\Sigma)),\\
H^4(X)&=0.
\end{align*}
%

\par We now turn to describing the primitive cohomology on the symplectic 4-manifold.  Given a symplectic manifold $(X^{2n}, \omega)$, choose a basis $\{\partial_{x^i}\}_{i=1}^{2n}$ for $TX$. Its differential forms $\Omega^*(X)$ carry an $sl_2(\mathbb{R})$ action, with the following  $sl_2$-representation:
\begin{align*}
L &:\Omega^k(X)\to \Omega^{k+2}(X)\\
& A_k\mapsto \omega\wedge A_k\\
\\
\Lambda &:\Omega^k(X)\to \Omega^{k-2}(X)\\
& A_k\mapsto \frac{1}{2}(\omega^{-1})^{ij}\iota_{\partial_{x^i}}\iota_{\partial_{x^j}}A_k\\
\\
H &:\Omega^k(X)\to \Omega^k(X)\\
&A_k\mapsto (n-k)A_k
\end{align*}
with $[H,\Lambda] = 2 \Lambda$, $[H, L]=-2L$, and $[\Lambda, L]= H$.
\par The {\it primitive forms} $\mathcal{P}^*(X)$ are the highest weight vectors in this algebra. That is, $A_k\in \mathcal{P}^k(X)$ precisely if $\Lambda A_k = 0 = \omega^{n-k+1}\wedge A_k$. This action leads to the Lefschetz decomposition so that any $k$-form has an expression $A_k = B_k + \omega\wedge B_{k-2}+\omega^2\wedge B_{k-4} +\cdots$ where each $B_i$ is primitive. In \cite{TsengII}, Tseng and Yau constructed the differentials $\partial_{\pm} : \mathcal{P}(X)^k \to \mathcal{P}^{k\pm1}(X)$ and the elliptic complex
\[
  \begin{CD}
     0 @>>>  \mathcal{P}^0  @>{\partial_+}>>  \mathcal{P}^1  @>{\partial_+}>>  \mathcal{P}^2  @>{\partial_+}>>  \cdots @>{\partial_+}>>\mathcal{P}^n\\
     @. @.@.@. @. @VV{\partial_+\partial_-}V\\
     0 @<{\partial_-}<<\mathcal{P}^0@<{\partial_-}<<\mathcal{P}^1@<{\partial_-}<<\mathcal{P}^2@<{\partial_-}<<\cdots@<{\partial_-}<<\mathcal{P}^n
  \end{CD}
\]
whose ``top'' and ``bottom'' cohomologies are denoted $PH^*_{+}(X,\omega)$ and  $PH^*_{-}(X,\omega)$, respectively.
Also in \cite{TsengIII}, it is proven that certain de Rham cohomological data is enough to compute the symplectic cohomology groups $PH^*_{\pm}(X)$, given by the isomorphisms below for $k\leq n$:
\begin{align}
PH^k_+(X) &\cong \text{coker}(L: H^{k-2}(X) \to H^k(X))\oplus \ker(L: H^{k-1}(X)\to H^{k+1}(X)),\label{SCI}\\
PH^k_-(X) &\cong \text{coker}(L: H^{2n-k-1}(X)\to H^{2n-k+1}(X))\oplus ker(L: H^{2n-k}(X)\to H^{2n-k+2}(X)). \label{SCII}
\end{align}
\par Let us first discuss the case where $\omega$ is chosen so that $[\omega]_{dR} = [dt\wedge d\pi]_{dR}$, the more general case will be treated at the end of the section. Applying equations (\ref{SCI}) and (\ref{SCII}) to the 4-manifold $X=S^1\times Y_f$,  along with computations from earlier in this section, yield
\begin{align*}
PH^0_+(X)&\cong \mathbb{R},\\
PH^1_+(X)&\cong H^1(X),\\
PH^2_+(X) &\cong H^2(X)/\langle dt\wedge d\pi\rangle \oplus \langle dt, d\pi\rangle\oplus [\ker(f^*-1)\cap \text{Im}(f^*-1)],\\
PH^2_-(X)&\cong H^2(X)\oplus \left[\langle dt\wedge d\pi\rangle\wedge \text{coker}(f^*-1)\right]/\left[\langle dt\wedge d\pi\rangle\wedge \ker(f^*-1)\right],\\
PH^1_-(X)&\cong H^3(X),\\
PH^0_-(X)&\cong 0.
\end{align*}
Let $b_i$ denote the Betti numbers of $X$ and ${\it p}_i^{\pm}(X,\omega)$ denote the dimensions of $PH^i_{\pm}(X,\omega)$. When the choice of the underlying symplectic structure is clear, we simply write ${\it p}_i^{\pm}$. Then,
\begin{align*}
{\it p}_0^+&=1,\\
{\it p}_1^+&=b_1,\\
{\it p}_2^+&= b_2+1+\text{dim}\left[\ker(f^*-1)\cap\text{Im}(f^*-1)\right],\\
{\it p}_2^-&= b_2+\text{dim}\left[\ker(f^*-1)\cap\text{Im}(f^*-1)\right],\\
{\it p}_1^-&= b_3,\\
{\it p}_0^-&=0,
\end{align*}
where we have used the fact that $\text{dim}\left[\ker(f^*-1)\cap\text{Im}(f^*-1)\right]$ and \newline $\text{dim}\left[(dt\wedge d\pi\wedge \text{coker}(f^*-1))/(dt\wedge d\pi\wedge \ker(f^*-1))\right]$ are equal by realizing that both quantities count the number of Jordan blocks of $f^*-1$ of size strictly greater than 1 (see discussion below). We note that the {\it primitive Euler characteristic} $\chi_p(X) =\sum (-1)^i{\it p}_i^+-\sum (-1)^i{\it p}_i^-= 2-b_1+b_3$ is fixed under homeomorphism type. However, the primitive Betti numbers ${\it p}_2^{\pm}$ may vary in general. 
\par Let us explain how this dimension relates to the Jordan blocks of $f^*-1$. For brevity we write $\nu_2 :=\text{dim}\left[\ker(f^*-1)\cap\text{Im}(f^*-1)\right]$. Now, if $\alpha \in \ker(f^*-1)\cap\text{Im}(f^*-1)$, then $(f^*-1)\alpha = 0$ and $(f^*-1)\beta = \alpha$ for some $\beta$. That is, $\alpha$ is an eigenvector in a Jordan chain of length at least 2. It follows that $\nu_2$ counts the number of Jordan blocks corresponding to eigenvalue $\lambda =1$ of size {\it at least} 2. More generally, there is a descending filtration of subgroups $PH^2_+(M)\supset J_1(M)\supset J_2(M)\supset\cdots$ where $J_k(M) = \ker(f^*-1)\cap\text{Im}(f^*-1)^k$. If $\alpha \in J_k(M)$, then it is the eigenvector in a Jordan chain of length at least $k+1$ given by $x_1 = \alpha$, $x_2 = (f^*-1)^{k-1}\beta$, $x_3 = (f^*-1)^{k-2}\beta$,$\cdots$, $x_k = (f^*-1)\beta$, $x_{k+1} = \beta$. Thus, the dimension of the filtered quotient $J_{k-1}/J_{k}$ counts the number of Jordan blocks of size {\it exactly} $k$.
\par In general, it is also possible to consider other symplectic structures on $X=S^1 \times Y_f$ where $[\omega]\neq [dt\wedge d\pi]$. Though not our main focus, we will describe how a different choice of the symplectic structure would affect the calculation of the primitive cohomology in the remainder of this subsection.  To begin, let  $i: \Sigma \xhookrightarrow{} Y_f$ be the inclusion map of the fiber and choose $\tilde{\omega}_f\in \Omega^2(Y_f)$ such that $i^*(\tilde{\omega}_f) = \omega_\Sigma$. Furthermore, assume $\tilde{\omega}_f$ can be chosen so that $[\omega_0] := [dt\wedge d\pi + \tilde{\omega}_f] = [dt\wedge d\pi]$. Then $PH^*(X,\omega_0)$ is given by the above computations. Now, given any 1-form $\eta \in \Omega^1(Y_f)$ such that $d(\eta\wedge d\pi)=0$, we can define a new symplectic form, $\omega_\eta:= \omega_0 + \eta\wedge d\pi= (dt+\eta)\wedge d\pi + \tilde{\omega}_f$. Of interest are those $\eta$'s such that $[\omega_\eta]\neq [\omega_0]$, which occurs precisely when $[d\pi\wedge \eta]\in H^2(Y_f)$ is non-trivial. Choose a Jordan basis $\{x_{i,0}\}_{i=1}^k$ for $\ker(f^*-1)$ and denote the corresponding Jordan chain of $x_{i,0}$ by $\{x_{i,0}, x_{i,1},\cdots, x_{i,n_i}\}$. Rearranging if necessary, we assume $n_i=0$ for $1\leq i \leq s$. Thus $\{x_{i,0}\}_{i=1}^s$ are the Jordan blocks of size exactly 1. Then, we can write
\begin{align*}
H^1(Y_f)&= \langle d\pi \rangle \oplus \langle x_{i,0}\rangle_{i=1}^k,\\
H^2(Y_f)&= \langle d\pi \wedge x_{i,n_i}\rangle_{i=1}^k,
\end{align*}
and express $[d\pi\wedge \eta]= \sum_{i=1}^k\lambda_i[d\pi\wedge x_{i,n_i}]$. We may write $PH^2_+(X,\omega_\eta)=H^2(X)/\langle[\omega_\eta]\rangle \oplus K_\eta$ where $K_\eta=\ker(\omega_\eta\wedge: H^1(X)\to H^3(X))$. Then
\begin{align*}
[\omega_\eta\wedge d\pi] &=[0],\\
[\omega_\eta\wedge dt]&= [\eta\wedge d\pi \wedge dt]=-[dt\wedge d\pi\wedge \eta],\\
[\omega_\eta\wedge x_{i,0}]&= [dt\wedge d\pi\wedge x_{i,0}].
\end{align*}
\par We see that $[\omega_\eta\wedge(\sum_{i=1}^s\lambda_ix_{i,0}+dt)] =[dt\wedge d\pi\wedge\sum_{i=s+1}^k\lambda_ix_{i,n_i}]$, which is trivial if and only if $\eta \in\ker(f^*-1)$. Similarly, denote by $C_\eta= \text{coker}(\omega_\eta\wedge: H^1(X)\to H^3(X))$. The above computations show $C_\eta \cong \langle dt\wedge d\pi\wedge x_{i,n_i}\rangle_{i={s+1}}^k/\langle dt\wedge d\pi\wedge \eta\rangle$. The quotient by the $\eta$ term will be extraneous in the case that $\eta\in \ker(f^*-1)$. In all, the resulting primitive cohomology groups $PH^*(X, \omega_\eta)$ can be expressed as follows.
\begin{align*}
PH^0_+(X,\omega_\eta)&\cong H^0(X),\\
PH^1_+(X,\omega_\eta)&\cong H^1(X),\\
PH^2_+(X,\omega_\eta)&\cong H^2(X)/\langle [\omega_\eta]\rangle \oplus K_\eta, \\
PH^2_-(X,\omega_\eta)&\cong H^2(X)\oplus C_\eta,\\
PH^1_-(X,\omega_\eta)&\cong  H^3(X),\\
PH^0_-(X,\omega_\eta)&\cong \langle 0 \rangle,
\end{align*} 
where 
\begin{align*}
K_\eta\cong& \begin{cases} \langle d\pi\rangle\oplus \langle x_{i,0} \rangle_{i=s+1}^k, &\lambda_i \neq 0 \text{\ for \ some \ }i>s\\ \langle d\pi, dt+\eta\rangle\oplus \langle x_{i,0} \rangle_{i=s+1}^k,&\lambda_i = 0 \text{\ for \ all \ }i>s  \end{cases}\\
\\
C_\eta\cong& \begin{cases}  \langle dt\wedge d\pi\wedge x_{i,n_i}\rangle_{i={s+1}}^k/\langle dt\wedge d\pi\wedge \eta\rangle, &\lambda_i \neq 0 \text{\ for \ some \ }i>s\\  \langle dt\wedge d\pi\wedge x_{i,n_i}\rangle_{i={s+1}}^k,&\lambda_i = 0 \text{\ for \ all \ }i>s  \end{cases}
 \end{align*}
\par Regardless of the class of $\eta$, we see $PH^k_\pm(X,\omega_\eta)$ are isomorphic to de Rham cohomologies for $k=0,1$. Furthermore, in the case that $\eta$ descends to a cohomology class $[\eta]\in H^1(Y_f)$, the above computations show $\dim PH^*(X,\omega_\eta)=\dim PH^*(X, \omega_0)$. Unless otherwise stated, in the remainder of the paper, we assume $[\omega] = [dt\wedge d\pi]$.
\subsection{Mapping Class Group}\label{Mapping Torus}\mbox{}\\
\par In this subsection, we review some of the necessary topics relevant to our work from mapping class group theory. We focus mainly on the mapping class group of $\Sigma_{1,4}$, detailing a set of generators described by Birman in \cite{Birman}. We wish to study the diffeomorphisms of $\Sigma_{g,n}$ up to an equivalence. We define the {\it mapping class group}, denoted by $\mathcal{M}(\Sigma_{g,n})$, as the group of diffeomorphisms fixing $P$ setwise, up to isotopies fixing $P$ setwise. We define the {\it pure mapping class group}, $\mathcal{P}\mathcal{M}(\Sigma_{g,n})$, as the subset of elements from $\mathcal{M}(\Sigma_{g,n})$ fixing $P$ pointwise. Since the majority of this paper takes place in $\mathcal{P}\mathcal{M}(\Sigma_{1,4})$ we briefly discuss the diffeomorphisms generating this subgroup for the torus with four marked points. We define $\tau_i$ as the longitudinal curve which passes above $y_1, y_2,\cdots, y_{i-1}$, through $y_i,$ and below $y_{i+1},\cdots, y_n$.  Denote by $\rho_i$ the meridian curve passing through $y_i$.
\par From these curves we define homeomorphisms $\mathcal{P}{ush}(\tau_i)$ and $\mathcal{P}{ush}(\rho_i)$, called the point-pushing maps. These are classical maps in mapping class group theory. They may be loosely visualized as follows: $\mathcal{P}{ush}(\tau_i)$ is the map which pushes the point $x_i$ around the curve $\tau_i$, ``dragging'' the rest of the surface $\Sigma_{1,4}$ with it. $\mathcal{P}{ush}(\rho_i)$ has a similar interpretation. In \cite{Birman}, Birman showed that the push maps generate the mapping class group:
$$\mathcal{P}\mathcal{M}(\Sigma_{1,4}) = \langle \mathcal{P}{ush}(\tau_i), \mathcal{P}{ush}(\rho_i)\rangle, i=1,2,3,4.$$ 
\par It turns out that these maps can be realized in terms of Dehn twists along homology generators for $H_1(\Sigma_{1,4})$. These explicit expressions are worked out in the Appendix. (The curves $\rho_i$ and $\tau_i$ are pictured in Figure \ref{rho_and_tau}, drawn on the square representing $\Sigma_{1,4}$.)
 \begin{figure}
 \caption{$\rho_i$ and $\tau_i$ paths on $\Sigma_{1,4}$}
 \begin{center}
 \tikzset{-<-/.style={decoration={
  markings,
  mark=at position .37 with {\arrow{<}}},postaction={decorate}}}
  \tikzset{->-/.style={decoration={
  markings,
  mark=at position .6 with {\arrow{>}}},postaction={decorate}}}
 \begin{tikzpicture}[scale =2]
 \draw (0,0) grid (1,1);
\fill (canvas cs: x=0.1cm, y =0.8cm) circle (1pt);
\fill (canvas cs: x=0.37cm, y =0.65cm) circle (1pt);
\fill (canvas cs: x=0.64cm, y =0.5cm) circle (1pt);
\fill(canvas cs: x=0.85cm, y=0.35cm) circle (1pt);
\draw[->-, blue, thick] (0.1,0)--(0.1,1);
\draw[->-, blue, thick] (0.37,0)--(0.37,1);
\draw[->-, blue, thick] (0.64,0)--(0.64,1);
\draw[->-, blue, thick] (0.85,0)--(0.85,1);
\node at (0.14,-0.1) {$\rho_1$};
\node at (0.35,-0.1) {$\rho_2$};
\node at (0.60,-0.1) {$\rho_3$};
\node at (0.86,-0.1) {$\rho_4$};
\node at (0.2,0.82) {$1$};
\node at (0.46,0.72) {$2$};
\node at (0.72,0.65) {$3$};
\node at (0.95,0.46) {$4$};
 \draw (2,0) grid (3,1);
\draw[->-, red, thick] (3,0.8)--(2,0.8);
\draw[->-, red, thick] (3,0.65)--(2,0.65);
\draw[->-, red, thick] (3,0.5)--(2,0.5);
\draw[->-, red, thick] (3,0.35)--(2,0.35);
\fill (canvas cs: x=2.1cm, y =0.8cm) circle (1pt);
\fill (canvas cs: x=2.37cm, y =0.65cm) circle (1pt);
\fill (canvas cs: x=2.64cm, y =0.5cm) circle (1pt);
\fill(canvas cs: x=2.85cm, y=0.35cm) circle (1pt);
\node at (3.12,0.83) {$\tau_1$};
\node at (3.12,0.65) {$\tau_2$};
\node at (3.12,0.47) {$\tau_3$};
\node at (3.12,0.32) {$\tau_4$};
 \end{tikzpicture}
 \label{rho_and_tau}
 \end{center}
 \end{figure}
\par Another important subgroup of the mapping class group is the {\it Torelli group}, $\mathcal{I}(\Sigma)$, consisting of diffeomorphisms acting trivially on (co)homology. Thus, $$\mathcal{I}(\Sigma) = \{f\in \mathcal{M}(\Sigma):1= f^*: H^1(\Sigma)\to H^1(\Sigma)\}.$$ \par Calculations in Section \ref{Mapping Torus} show that if $f\in \mathcal{I}(\Sigma)$, then $H^*( S^1\times Y_f) = H^*(T^2\times\Sigma)$ and $PH^*(S^1\times Y_f) = PH^*(T^2\times\Sigma)$ as groups. Thus two Torelli-bundles cannot be distinguished from their primitive cohomology groups alone. However, by the same reasoning, $f\in \mathcal{I}$ and $g\not\in \mathcal{I}$ can {\it always} be distinguished by the dimension of the cohomology groups.
\section{McMullen-Taubes Type 4-manifolds} \label{examples}
\par In this section, we will discuss different presentations of a $3$--manifold, the complement of a link in $S^3$, as fibration with fiber a punctured torus or sphere. All the torus fiber examples will induce symplectic structures with identical primitive cohomologies but the sphere fibration will be shown to give primitive cohomology of different dimension. 

We quickly review the examples constructed in \cite{McT} and \cite{Vidussi}. In \cite{McT}, McMullen and Taubes considered a 3-manifold $M$ which is a link complement $S^3\backslash K$. Here, $K$ is the Borromean rings $K_1\cup K_2\cup K_3$ plus $K_4$, the axis of symmetry of the rings. By performing 0-surgery along the Borromean rings we obtain a presentation of $M$ as $\mathbb{T}^3\backslash L$ where:
\begin{itemize}
\item $L \subset \mathbb{T}^3$ is a union of four disjoint, closed geodesics $L_1, L_2, L_3, L_4$,
\item $H_1(\mathbb{T}^3)=\langle L_1, L_2. L_3\rangle$,
\item $L_4 = L_1+L_2+L_3$.
\end{itemize}
\par The fiber of $M$ is the 2-torus with punctures coming from the $L_i$. The different fibration structures are captured by the Thurston ball. In \cite{McT}, this ball is computed as the dual of the Newton polytope of the Alexander polynomial. Endow the ball with coordinates $\phi=(x,y,z,t)$ as in \cite{McT}. Then, the Thurston unit ball has 16 top--dimensional faces (each fibered) coming in 8 pairs under the symmetry $(\phi, -\phi)$. Furthermore, restricting to faces that are dual to those vertices of the Newton polytope with no $t$--component, we get 14 faces, that come in two types; quadrilateral and triangular. It is shown in \cite{McT} that there exists a pair of inequivalent symplectic forms on a 4-manifold coming from different fibrations of $\mathbb{T}^3\backslash L$. These fibrations correspond to points lying on the two distinct types of faces. In \cite{Vidussi}, it is shown that the remaining pair of $16-14=2$ faces (with a non-zero $t$-component) yield a third symplectic structure which is inequivalent to the 
two found by McMullen and Taubes. 
\par We will investigate the monodromy of the fibration given in \cite{Vidussi}, in which it is observed that $M$ admits a fibration with fiber the four-punctured 2-sphere. Table \ref{mon-table} summarizes the conclusions of the examples to follow. Determining these monodromy formulas explicitly is a crucial step in computing the dimension of $PH_{\pm}^2(X, \omega)$, since it depends on the Jordan decomposition.
\par The first example is the fibration with fiber $\Sigma_{0,4}$, hence `spherical' type.  The other two examples are of `toroidal' type with fiber $\Sigma_{1,4}$. In the spherical example, the given projection vector is the cohomology class in $H^1(M^3)$ corresponding to a point on the Thurston ball. The  projection vectors of the `toroidal' type examples refer to the vector used in its fiber bundle construction and not the point on the Thurston ball. These details are elaborated on in the Appendix.
\begin{table}
\caption{Monodromies} \label{mon-table}
\begin{center}
  \begin{tabular}{ | l | c |r| }
    \hline
    Type & Projection Vector $v_1$ &Monodromy \\ \hline
       Spherical& (0,0,0,1)& $\sigma_1^{-1}\sigma_2\sigma_1^{-1}\sigma_2\sigma_1^{-1}\sigma_2$\\\hline
    Toroidal& $(-1,-1,1)$&$\tau_3^{-1}\tau_2^{-1}\tau_1^{-1}\rho_1^{-1}\rho_2^{-1}\tau_1^{-1}\rho_2\tau_4^{-1}\rho_4^{-1}\tau_3^{-1}$\\ \hline
    Toroidal& $(-1,1,1)$&$\rho_2^{-1}\tau_1\rho_2^{-1}\tau_1^{-1}\tau_4^{-1}\rho_3^{-2}\tau_2^{-1}\rho_4^{-1}\rho_1^{-1}$  \\ \hline
  \end{tabular}
\end{center}
\end{table}
For notational simplicity, in Table \ref{mon-table}, $\mathcal{P}{ush}(\rho_i)$ and  $\mathcal{P}{ush}(\tau_i)$ are abbreviated to $\rho_i$ and $\tau_i$, respectively.
\begin{Ex(s)}
In this example, we take the fibration from \cite{Vidussi} obtained by performing 0-surgery along the $K_4$ axis. The fiber is $S^2$ punctured four-times, with monodromy given by the braid word corresponding to the Borromean rings.  Let $\sigma_i$ denote the half-Dehn twist which switches marked points $i$ and $i+1$. This homeomorphism can be viewed similar to the push map, where we ``push'' the surface through the arc connecting the $i$th and $(i+1)$th points. As a braid, it is the element which passes the $i$th string over the $(i+1)$th string. Under this identification, the monodromy is given by $$\sigma_1^{-1}\sigma_2\sigma_1^{-1}\sigma_2\sigma_1^{-1}\sigma_2.$$
\end{Ex(s)}
\par The derivation of the toroidal type monodromies is much more involved. We carefully work out these formulas in the next section. For now, we take the monodromies from Table \ref{mon-table} as true and examine their cohomological implications.
\subsection*{Cohomological Analysis}
\par Denote by $g$ the monodromy from fiber the four--punctured 2-sphere $\Sigma_{0,4}$. Similarly, $f$ denotes either of the two monodromies coming from the four--punctured torus fiber $\Sigma_{1,4}$ in Table \ref{mon-table}. With the monodromy $f$, we can compute its action on $H^1(\Sigma_{1,4})$ (either by hand or with the help of software) to conclude that $\text{dim}\ker(f^*-1) =b_1(Y_{f})-1= 3$ in both `toroidal' cases. Let $X = S^1\times Y_{f} = S^1\times Y_g$, these manifolds being diffeomorphic by the above discussion. We will compute the primitive cohomology of the symplectic structures associated to the fibrations determined by the monodromies $f$ and $g$. 
\par With respect to the ordering $(a_0, a_1, a_2, a_3, b_0)$ of basis vectors for $H^1(\Sigma_{1,4})$,  computation shows the action on $H^1(\Sigma_{1,4})$ is given by 
\[f^*-1=
\begin{pmatrix}
-1&-1&-1&-1&1\\
0&0&0&0&-1\\
1&1&1&1&1\\
0&0&0&0&-1\\
0&0&0&0&0
\end{pmatrix},
\hspace{0.3in}
J= \begin{pmatrix}
0&1&0&0&0\\
0&0&0&0&0\\
0&0&0&1&0\\
0&0&0&0&0\\
0&0&0&0&0
\end{pmatrix},
\]
for all $f$. Here $J$ is the Jordan matrix for $f^*-1$. We note it has two blocks of size 2 and one of size 1. It follows that
\begin{align*}
\ker(f^*-1)&=\langle (1,0,0,-1,0), (0,1,0,-1,0), (0,0,1,-1,0)\rangle,\\
\text{Im}(f^*-1)&= \langle (-1,0,1,0,0), (1,-1,1,-1,0) \rangle.\\
\end{align*}
A quick check shows 
\begin{align*}
(f^*-1)(-1,0,1,0,0) &= 0=(f^*-1)(1,-1,1,-1,0).
\end{align*}
Hence, we conclude $$\text{dim}\ker(f^*-1)\cap\text{Im}(f^*-1) =\dim\text{Im}(f^*-1)= 2.$$ Notice this dimension agrees with the number of blocks from $J$ of size at least 2. Computations from Section 2.1 show that for any choice of $\eta$ associated to $\omega_f$ we have either
\begin{align*}
{\it p}_2^{+}(X,\omega_f)=& \begin{cases} 9, & \eta \text{ is such that }\lambda_i \neq 0 \text{\ for some\ }i>s\\ 10,&\eta \text{ is such that }\lambda_i = 0 \text{\ for all\ }i>s  \end{cases}
\end{align*}
We now turn to $(X, \omega_g)$. Since $Y_f$ is  diffeomorphic to $Y_g$, we must have 
$$ \text{dim}\ker(g^*-1) =\dim\ker(f^*-1)= 3.$$ Moreover, using the formula $\chi(\Sigma_{g,n}) = 2-2g-n$, it follows $\chi(\Sigma_{0,4})=-2 = 1 - b_1(\Sigma_{0,4})$, and so $b_1(\Sigma_{0,4}) = 3$. But by Rank-Nullity, $3 = 3 + \dim\text{Im}(g^*-1)$, from which it follows $\dim\ker(g^*-1)\cap\text{Im}(g^*-1) = 0$.  Thus ${\it p}_2^+(X,\omega_g) = b_2(X_g) +1 =8\neq {\it p}_2^+(X,\omega_f)$. 
\par We point out that from the Jordan form of the $f$, these monodromies are not Torelli elements of $\mathcal{M}(\Sigma_{1,4})$. However, by dimension considerations, we saw $\dim\text{Im}(g^*-1) =0$ and so $g$ {\it is} a Torelli element of $\mathcal{M}(\Sigma_{0,4})$. Moreover, even though each $f$, $f'$  coming from fiber $\Sigma_{1,4}$ are not Torelli, $f^*=f'^*$ and so it follows that $f'f^{-1}$ is a Torelli element.
\newline
\par These calculations give the following theorem.
\begin{Th}\label{fibration-thm}
There exist inequivalent fibrations of the $3$-manifold $M$ with inequivalent associated symplectic 4-manifolds $(X, \omega_f)$, $(X, \omega_g)$, which can be distinguished by primitive cohomologies. In particular, $$p_2^+(X,\omega_f)\neq p_2^+(X,\omega_g).$$
\end{Th}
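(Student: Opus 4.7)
The plan is to verify that the monodromies $f$ and $g$ from Table~\ref{mon-table} induce non-conjugate actions on the first cohomology of their respective fibers, and to show that the resulting dimensional gap in $\ker \cap \operatorname{Im}$ forces $p_2^+$ to differ. Since the primitive cohomology dimensions are invariants of the symplectic deformation-diffeomorphism class, this will establish that $\omega_1$ and $\omega_2$ are inequivalent.

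First I would take the toroidal monodromy formulas from Table~\ref{mon-table} as given (their derivation is the content of Section~\ref{sec-construct}) and compute, for each of the two choices, the matrix of $f^* - 1$ acting on $H^1(\Sigma_{1,4})$ in the basis $(a_0, a_1, a_2, a_3, b_0)$. Using the classical formula for the action of a Dehn twist along a curve $\gamma$ (namely $\alpha \mapsto \alpha + \langle \alpha, \gamma\rangle\,\gamma$), one can multiply the factors in order and check that the result is the $5 \times 5$ matrix displayed in the excerpt. Its Jordan form has two blocks of size $2$ and one of size $1$; in particular $\dim \ker(f^*-1) = 3$, $\dim \operatorname{Im}(f^*-1) = 2$, and a direct check shows the image sits inside the kernel, so $\dim[\ker(f^*-1)\cap \operatorname{Im}(f^*-1)] = 2$.

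Next I would handle the spherical monodromy $g = \sigma_1^{-1}\sigma_2\sigma_1^{-1}\sigma_2\sigma_1^{-1}\sigma_2$ on $\Sigma_{0,4}$ by a shortcut rather than direct matrix computation. Since both fibrations come from the same $3$-manifold $M$, we have $X_f \cong X_g = S^1 \times M$, forcing $b_1(X_g) = b_1(X_f)$ and hence $\dim \ker(g^*-1) = 3$. But $b_1(\Sigma_{0,4}) = 3$ from the Euler characteristic formula, so rank-nullity gives $\operatorname{Im}(g^*-1) = 0$. Thus $g$ is a Torelli element of $\mathcal{M}(\Sigma_{0,4})$ and $\ker(g^*-1)\cap \operatorname{Im}(g^*-1) = 0$.

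Finally, invoke the Section~\ref{sec-prelim} formula
\[
p_2^+ = b_2 + 1 + \dim\bigl[\ker(f^*-1)\cap \operatorname{Im}(f^*-1)\bigr]
\]
(valid when $[\omega]=[dt\wedge d\pi]$). Substituting, $p_2^+(X_f,\omega_1) = b_2(X_f) + 3$ while $p_2^+(X_g,\omega_2) = b_2(X_g) + 1$; since $X_f \cong X_g$ their Betti numbers agree, so the two invariants differ by $2$. The main obstacle is really the derivation of the toroidal words in Table~\ref{mon-table}: the linear-algebra step above is routine once the monodromy is known, but producing those specific Dehn-twist expressions requires the careful geometric analysis of Section~\ref{sec-construct}, tracing the first-return map of the flow dual to a chosen face of the Thurston norm ball and expressing its action on $\pi_1(\Sigma_{1,4})$ in terms of point-pushing generators. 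That is where the real work lies.
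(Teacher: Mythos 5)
Your proposal is correct and follows essentially the same route as the paper: compute the Jordan structure of $f^*-1$ from the Table~\ref{mon-table} monodromies to get $\dim[\ker(f^*-1)\cap\operatorname{Im}(f^*-1)]=2$, use the diffeomorphism $X_f\cong X_g$ together with $b_1(\Sigma_{0,4})=3$ and rank--nullity to force $g$ to be Torelli, and then compare via $p_2^+=b_2+1+\dim[\ker\cap\operatorname{Im}]$. Your concluding values ($b_2+3$ versus $b_2+1$) agree with the paper's $10$ versus $8$, and you correctly identify that the substantive work is the derivation of the toroidal monodromy words in Section~\ref{sec-construct}.
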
 
To establish Theorem \ref{fibration-thm}, it only remains to verify the toroidal type monodromies in Table \ref{mon-table}. Before this verification, we point out an interesting feature of $p_2^+$ in this situation. By calculations from Section 2, $p_2^+(X, \omega_h) = 8 +\dim\ker(h^*-1)\cap\text{Im}(h^*-1) \geq 8$ for {\it any} monodromy $h$. However, if $h$ is some monodromy on $\Sigma_{1, 4}$ we know its Jordan decomposition must give blocks of size $(2, 2, 1)$ or $(3, 1, 1)$. That is, $h$ has at least one block of size of at least 2, so that $p_2^+ > 8$. Consequently, $p_2^+$ can distinguish the monodromy type in the sense that $p_2^+(X, \omega_f)$ is always larger than $p_2^+(X, \omega_g)$ for any $f \in \mathcal{M}(\Sigma_{1, 4})$ and $g \in \mathcal{M}(\Sigma_{0, 4})$. Moreover, the toroidal type examples above all have block sizes $(2, 2, 1)$. But if a monodromy with sizes $(3, 1, 1)$ existed, $p_2^+$ could also distinguish the two. Nonetheless, the exploration of the explicit constructions of the McMullen-Taubes type monodromies is still valuable. It provides an algorithm to produce the concrete Dehn twists which are responsible for the actions of $f^*-1$ on cohomology.
%
\section{Construction of Monodromies}\label{sec-construct}
In this section, we provide details for the construction of the toroidal monodromies in Table \ref{mon-table}. The Appendix gives an even more specific outline of the procedure that follows. In the examples to come, we take different bases $v_1 = (a_1, a_2, a_3)$, $v_2=(1,1,0)$, $v_3=(0,1,1)$ and fiber along $v_1$ so that the fiber at time $t$ looks like $\Sigma_{t,4} = tv_1+\langle v_2, v_3\rangle$ with marked points
\begin{align*}
y_1(t)&=(-4\epsilon, 3\epsilon) + (a_3-a_2,-a_3)t,\\
y_2(t)&=(-\epsilon, 2\epsilon) + (-a_1, a_1-a_2)t,\\
y_3(t)&=(0,0)+(a_3-a_2,a_1-a_2)t,\\
y_4(t)&=(\epsilon,-3\epsilon)+(-a_1,-a_3)t.\\
\end{align*}
\par Here, $\epsilon$ is some small fixed constant used to shift the marked points away from the origin at $t=0$. The vector $v_1$ is the projection vector given in column 2 of Table \ref{mon-table}. The general idea is as follows,
\begin{enumerate}
\item Using the paths of the punctures $y_i$, find relative locations to determine if $y_i$ passes above or below $y_j$.
\item  Express $\mathcal{P}{ush}(y_i(t))$ of the $y_i$ path in terms of generators $\mathcal{P}{ush}(\rho_i)$, $\mathcal{P}{ush}(\tau_i)$.
\item Calculate the intersection points of punctures $(y_i(t), y_j(t))$ at times $(t_i, t_j)$. If $t_i>t_j$ then $y_i$ crosses over $y_j$. If $t_i< t_j$ then $y_j$ crosses over $y_i$.
\item Use the crossings information to determine the order of $\mathcal{P}{ush}(y_i(t))$ maps in the final monodromy.
\end{enumerate}
\par The procedure is best demonstrated through examples. As before, we drop the push notation so that $\mathcal{P}ush(\rho_2)\mathcal{P}ush(\tau_1)^{-1}\mathcal{P}ush(\tau_3)$ is simply denoted by $\rho_2\tau_1^{-1}\tau_3$. We also use function notation right to left so that the previous word indicates $y_3$ travels along $\tau_3$ then $y_1$ along the inverse of $\tau_1$ then finally $y_2$ along $\rho_2$. Homeomorphism type of the below examples was confirmed with SnapPy \cite{SnapPy}.
%
\begin{Ex(t)} $v_1= (-1,-1,1)$ 
\newline
The paths of the corresponding marked points are
\begin{align*}
y_1(t)&=(-4\epsilon, 3\epsilon) + (2,-1)t,\\
y_2(t)&=(-\epsilon, 2\epsilon) + (1, 0)t,\\
y_3(t)&=(0,0)+(2,0)t,\\
y_4(t)&=(\epsilon,-3\epsilon)+(1,-1)t.\\
\end{align*}
 \par Thus $y_2$ and $y_3$ travel in a parallel horizontal direction. $y_1$ and $y_4$ travel downwards and to the right and so will intersect both $y_2$ and $y_3$. We first find these intersection times. We illustrate the process for $y_1$ and $y_3$ and summarize the other points in Table \ref{Ex2_Ints}. We need times $t_1$ and $t_3$ so that $y_1(t_1) = y_3(t_3)$. In other words, we seek a solution to the system
 \begin{align*}
-4\epsilon+2t_1=2t_3,\\
3\epsilon-t_1=0,
 \end{align*}
which gives $(t_1, t_3) =(3\epsilon, \epsilon+\frac{n}{2})$, $n=0,1$. Hence $y_1$ and $y_3$ intersect twice. The first time $y_1$ passes over $y_3$. Then at $t_3=\epsilon+\frac{1}{2}$, $y_3$ crosses $y_1$. At $t_2=\frac{5}{8}\epsilon +\frac{1}{2}$, $y_2$ passes over $y_1$. Similarly solving the corresponding system for $y_2$ and $y_3$ yields $(t_2, t_3) = (\frac{2}{3}\epsilon + \frac{n}{2}, 1-\frac{1}{3}\epsilon)$, $n=0,1$. Both $y_2$ times occur before $y_3$, hence we conclude $y_3$ passes over $y_2$ twice. The remaining points of intersection are given in Table \ref{Ex2_Ints}. The times specified are the later of the two crossing times and the points have been listed in order of intersection occurrence, from first to last.
\begin{table}[h]
\caption{Toroidal Example 1 Intersections} \label{Ex2_Ints}
\centering
  \begin{tabular}{ | l | c |r| }
    \hline
    Points & Time &Crossing \\ \hline
    $(y_1,y_3)$ & $3\epsilon$& $y_1$ over $y_3$ \\ \hline
    $(y_1, y_3)$ & $\epsilon+\frac{1}{2}$&$y_3$ over $y_1$\\ \hline
    $(y_2,y_4)$ & $1-3\epsilon$& $y_2$ over $y_4$ \\ \hline
    $(y_3,y_4)$& $1-3\epsilon$& $y_4$ over $y_3$\\\hline
    $(y_1,y_2)$ & $1-\epsilon$& $y_2$ over $y_1$\\\hline
    $(y_1,y_4)$& $1-\epsilon$&$y_1$ over $y_4$\\ \hline
    $(y_3,y_4)$& $1-\epsilon$&$y_3$ over $y_4$\\ \hline
  \end{tabular}
\end{table}
\par Pictured in Figure \ref{example 1.1} are the paths of the $y_i$ drawn in the plane (up to identification), where we have decomposed the ``diagonal'' paths of $y_1$ and $y_4$ into a combination of basis curves $\rho_i$ and $\tau_i$. To find the path of $y_1$, for example, we must use its velocity vector $(2,-1)$ as well as the relative locations of $y_1$ with respect to the start points of $y_2$, $y_3$, and $y_4$. Given that point $y_2$ starts at $(-\epsilon, 2\epsilon)$, we have $y_1(\frac{3}{2}\epsilon)=(-\epsilon,\frac{3}{2}\epsilon)$ and so $y_1$ travels `below' the $y_2$ start point. Similar computations show $y_1$ travels above both the $y_3$ and $y_4$ start points. As illustrated in Figure \ref{example 1.1}, the velocity vector $(2,-1)$ suggests $y_1$ has a path given by $\tau_1^{-1}\rho_1^{-1}\tau_1^{-1}$. However the diagonal path homotopic to this combination will not preserve the condition that $y_1$ travels below the $y_2$ start point. To remedy this situation, we must begin the $y_1$ monodromy with the loop $C_{12}$. This curve travels counterclockwise from $y_1$, enclosing $y_2$. Figure \ref{example 1.2} illustrates the $\tau_1^{-1}C_{12}$ portion of the monodromy.
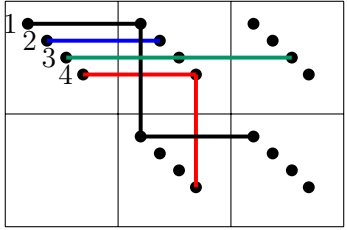
\begin{figure}[h]
\caption{Example 1 Marked Point Paths}
\centering
 \begin{tikzpicture}[scale =1.5]\label{example 1.1}
 \draw (0,0) grid (3,2);
\fill (canvas cs: x=0.2cm, y =1.8cm) circle (1.5pt);
\draw (0.2cm, 1.8cm) node[left] {1};
\fill (canvas cs: x=0.37cm, y =1.65cm) circle (1.5pt);
\draw (0.37cm, 1.65cm) node[left] {2};
\fill (canvas cs: x=0.54cm, y =1.5cm) circle (1.5pt);
\draw (0.54cm, 1.5cm) node[left] {3};
\fill(canvas cs: x=0.69cm, y=1.35cm) circle (1.5pt);
\draw (0.69cm, 1.35cm) node[left] {4};
\fill (canvas cs: x=1.2cm, y =1.8cm) circle (1.5pt);
\fill (canvas cs: x=1.37cm, y =1.65cm) circle (1.5pt);
\fill (canvas cs: x=1.54cm, y =1.5cm) circle (1.5pt);
\fill(canvas cs: x=1.69cm, y=1.35cm) circle (1.5pt);
\fill (canvas cs: x=1.2cm, y =0.8cm) circle (1.5pt);
\fill (canvas cs: x=1.37cm, y =0.65cm) circle (1.5pt);
\fill (canvas cs: x=1.54cm, y =0.5cm) circle (1.5pt);
\fill(canvas cs: x=1.69cm, y=0.35cm) circle (1.5pt);
\fill (canvas cs: x=2.2cm, y =0.8cm) circle (1.5pt);
\fill (canvas cs: x=2.37cm, y =0.65cm) circle (1.5pt);
\fill (canvas cs: x=2.54cm, y =0.5cm) circle (1.5pt);
\fill(canvas cs: x=2.69cm, y=0.35cm) circle (1.5pt);
\fill (canvas cs: x=2.2cm, y =1.8cm) circle (1.5pt);
\fill (canvas cs: x=2.37cm, y =1.65cm) circle (1.5pt);
\fill (canvas cs: x=2.54cm, y =1.5cm) circle (1.5pt);
\fill(canvas cs: x=2.69cm, y=1.35cm) circle (1.5pt);
\draw[-, line width=1.5pt, blue] (0.37,1.65)--(1.37,1.65);
\draw[-, line width =1.5pt, red] (0.69,1.35)--(1.69,1.35);
\draw[-, line width =1.5pt, red] (1.69,1.35)--(1.69,0.35);
\draw[-, line width=1.5pt] (0.2,1.8)--(1.2,1.8);
\draw[-, line width=1.5pt] (1.2,1.8)--(1.2,0.8);
\draw[-,line width=1.5pt] (1.2,0.8)--(2.2,0.8);
\draw[-, line width=1.5pt, green!60!blue] (0.54,1.5)--(1.54, 1.5);
\draw[-, line width=1.5pt, green!60!blue] (1.54,1.5)--(2.54, 1.5);
 \end{tikzpicture}
\end{figure}
\begin{figure}
\caption{$C_{12}$ Path in Example 1}
\centering
 \begin{tikzpicture}[scale =1.5]\label{example 1.2}
\tikzset{->-/.style={decoration={
markings,
mark=at position .67 with {\arrow{>}}},postaction={decorate}}}
 \draw (0,0) grid (2,1);
\fill (canvas cs: x=0.2cm, y =0.8cm) circle (1.5pt);
\draw (0.22cm, 0.9cm) node[left] {1};
\fill (canvas cs: x=0.37cm, y =0.65cm) circle (1.5pt);
\draw (0.33cm, 0.65cm) node[left] {2};
\fill (canvas cs: x=0.60cm, y =0.48cm) circle (1.5pt);
\fill(canvas cs: x=0.76cm, y=0.33cm) circle (1.5pt);
\fill (canvas cs: x=1.2cm, y =0.8cm) circle (1.5pt);
\draw[->-, line width=1pt] (0.2,0.8)--(1.2,0.8);
\draw[->-,line width=1pt] (0.2,0.8) .. controls (0.4,0.25) and (0.8,0.65) .. (0.2,0.8);
 \draw (3,0) grid (5,1);
\fill (canvas cs: x=3.2cm, y =0.8cm) circle (1.5pt);
\draw (3.22cm, 0.9cm) node[left] {1};
\fill (canvas cs: x=3.37cm, y =0.65cm) circle (1.5pt);
\draw (3.33cm, 0.65cm) node[left] {2};
\fill (canvas cs: x=3.60cm, y =0.48cm) circle (1.5pt);
\fill(canvas cs: x=3.76cm, y=0.33cm) circle (1.5pt);
\fill (canvas cs: x=4.2cm, y =0.8cm) circle (1.5pt);
\draw[->-, line width=1pt] (3.7,0.8)--(4.2,0.8);
\draw[->-,line width=1pt] (3.2,0.8) .. controls (3.3,0.42) .. (3.7,0.8);
 \end{tikzpicture}
\end{figure}
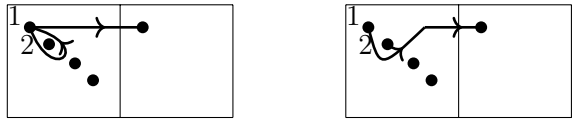
\par $y_4$ is the only other diagonal path. We can easily check that it travels above the $y_1$, $y_2$, and $y_3$ start points. Hence its path is simply given by $\tau_4^{-1}\rho_4^{-1}$, indicated by the $(1,-1)$ velocity vector. 
\par Summarizing, the monodromies of the punctures are given by
\begin{align*}
y_1(t)&:  \tau_1^{-1}\rho_1^{-1}\tau_1^{-1}C_{12}=\tau_1^{-1}\rho_1^{-1}\rho_2^{-1}\tau_1^{-1}\rho_2,\\
y_2(t)&: \tau_2^{-1}, \\
y_3(t)&: \tau_3^{-2},\\
y_4(t)&: \tau_4^{-1}\rho_4^{-1}.
\end{align*}
Now, we must determine the order of these individual monodromies in the final map. Using the above formulas, it's clear $y_2(t)$ and $y_3(t)$ are parallel so their relative order to each other in the final monodromy doesn't matter. From Table 1, we see every other point crosses over $y_3$ first, but then $y_3$ crosses over $y_1$ and $y_4$ again later. Thus we should put one $\tau_3^{-1}$ at the beginning of the monodromy and the other $\tau_3^{-1}$ at the end. Next, both $y_1$ and $y_2$ cross over $y_4$ so the $y_4$ term should come next. 
\par It only remains to determine the order of $y_1$ and $y_2$, which is given by Table 1 as $y_1$ then $y_2$. Therefore our monodromy has the formula $y_3\circ y_2\circ y_1\circ y_4\circ y_3$, where the first and last $y_3$ terms are each a $\tau_3^{-1}$. This ordering gives 10 possible crossings, but $y_2$ and $y_3$ are parallel and $y_3$ appears twice. Hence the number reduces to $10-3 = 7$, matching the occurrences in Table \ref{Ex2_Ints}. 
\par Piecing all the arguments together shows the final monodromy is isotopic to
\begin{align*}
&\tau_3^{-1}\tau_2^{-1}(\tau_1^{-1}\rho_1^{-1}\tau_1^{-1}C_{12})\tau_4^{-1}\rho_4^{-1}\tau_3^{-1}=\tau_3^{-1}\tau_2^{-1}(\tau_1^{-1}\rho_1^{-1}\rho_2^{-1}\tau_1^{-1}\rho_2)\tau_4^{-1}\rho_4^{-1}\tau_3^{-1}.
\end{align*} 
\end{Ex(t)}
\
\\
\begin{Ex(t)} $v_1= (-1,1,1)$
\newline
The paths of the punctures are given by
 \begin{align*}
 y_1(t)&=(-4\epsilon, 3\epsilon)+(0,-1)t,\\
 y_2(t)&=(-\epsilon,2\epsilon)+(1, -2)t,\\
 y_3(t)&=(0,0)+(0,-2)t,\\
 y_4(t)&=(\epsilon, -3\epsilon) + (1, -1)t.
 \end{align*}
 Implementing the techniques from the previous example, we obtain the intersections in Table \ref{Ex3_Ints}. There is only one non-trivial diagonal path, given by $y_2$. Evaluating this path at the appropriate times yields 
 \begin{align*}
 y_2(-3\epsilon)&=(-4\epsilon,8\epsilon),\\
 y_2(\epsilon)&=(0,0),\\
 y_2(2\epsilon)&=(\epsilon,-2\epsilon).
 \end{align*} 
 We see that $y_2$ travels above $y_1$ and $y_4$ start points and through $y_3$ at the origin. We note at $t=\epsilon$,  $y_3(\epsilon)=(0,-2\epsilon)$ has traveled away from the origin and so $y_2(t)$ and $y_3(t)$ do not actually collide. Thus, in between $\rho_2^{-1}\rho_2^{-1}\tau_2^{-1}$, we must insert a loop traveling counterclockwise starting at $y_2$ and enclosing $y_1$. It turns out this curve is also homotopic to $C_{12}$ (see \cite{Birman} for more discussion). By drawing a diagram similar to Figure \ref{example 1.1} one can see the correct placement should be $\rho_2^{-1}C_{12}\rho_2^{-1}\tau_2^{-1}$. The  paths of the other points are straightforward, given by 
\begin{align*}
y_1& : \rho_1^{-1},\\
y_2& : \rho_2^{-1}C_{12}\rho_2^{-1}\tau_2^{-1}=\rho_2^{-1}\tau_1\rho_2^{-1}\tau_1^{-1}\tau_2^{-1},\\
y_3& : \rho_3^{-2}, \\
y_4& : \tau_4^{-1}\rho_4^{-1}.
\end{align*}
\par The ordering for this example is similar to that of Example 1; this time we need to split both of the paths $y_2$ and $y_4$ into two parts each. Notice from the individual monodromies that $y_1$ and $y_3$ are parallel so their relative order doesn't matter. We proceed by considering the remaining interactions separately. Since $y_1$ passes under for all its crossings, it appears first. Then $y_3$ over $y_2$ and $y_2$ over $y_4$ suggests the ordering $y_3\circ y_2\circ y_4$. However, we need $y_4$ to cross over $y_3$ and this current arrangement does the opposite. Hence we must split the $y_4$ monodromy into two components:  $y_4\circ y_3\circ y_2\circ y_4$. Finally, if we leave $y_2$ together, we will have both $y_4$ and $y_2$ crossing over one another at different times. Consequently, we also split $y_2$ for the ultimate ordering given by  $y_2\circ y_4\circ y_3\circ y_2\circ y_4\circ y_1$. The final monodromy pieces together as $$y_2\circ \tau_4^{-1}\circ \rho_3^{-2}\circ y_2\circ\rho_4^{-1}\circ\rho_1^{-1}.$$ To reiterate, we are required to separate $y_2$ such that the $\tau_4^{-1}$ does not intersect the first term. This obstruction suggests the first $y_2$ part is $\tau_2^{-1}$ and the second term is the remaining $\rho_2^{-1}C_{12}\rho_2^{-1}$. This construction yields the desired map $$\rho_2^{-1}C_{12}\rho_2^{-1}\tau_4^{-1}\rho_3^{-2}\tau_2^{-1}\rho_4^{-1}\rho_1^{-1}.$$
\begin{table}[h]
  \caption{Toroidal Example 2 Intersections} \label{Ex3_Ints}
\centering
  \begin{tabular}{ | l | c |r| }
    \hline
    Points & Time &Crossing \\ \hline
    $(y_2,y_4)$ & $3\epsilon$& $y_2$ over $y_4$ \\ \hline
    $(y_2, y_3)$ & $\frac{1}{2}$&$y_3$ over $y_2$\\ \hline
    $(y_1,y_4)$& $1-5\epsilon$& $y_4$ over $y_1$\\\hline
    $(y_1,y_2)$ & $1-3\epsilon$& $y_2$ over $y_1$\\\hline
    $(y_3,y_4)$ & $1-\epsilon$& $y_4$ over $y_3$\\ \hline
  \end{tabular}
  \end{table}
\end{Ex(t)}
\subsection*{Thurston Ball Revisited}
Recall the toroidal examples in Table 1 represent fibrations with fiber a four-times punctured torus. Each such fibration corresponds to an element $\psi\in H^1(T^3, \mathbb{Z})$ on the Thurston ball. We claim $\psi$ must lie in the open cone over a quadrilateral face. This claim follows from Theorem 2.3 in \cite{McT} and the fact that the fiber has exactly four punctures. Indeed, writing $\psi = (a,b,c)$, we require 
\begin{align*}
\psi(L_1)&= a = \pm 1,\\
\psi(L_2)&= b = \pm 1,\\
\psi(L_3)& = c = \pm 1,\\
\psi(L_4)& = a+b+c = \pm 1.
\end{align*}
These equations yield six possibilities for $\psi$: $(-1,1,1), (1,-1,1), (1,1,-1), (-1,-1,1),
 (-1,1,-1)$, $(1,-1,-1)$. Referring to a diagram of the Thurston ball, one can easily verify each of these lives on a quadrilateral face.
\section{4-Manifolds from Graph Links}\label{sec-GL}
Here, we study a family of two-component links with different fibrations over a circle, resulting in inequivalent symplectic structures after taking the  product with a circle.  We start by describing the open 3-manifolds.  Let $M^{(2n)} = S^3\backslash K^{(2n)}$, where $K^{(2n)}$ is the graph link pictured in Figure \ref{graph_link} below. The details of this diagram are given in \cite{Graph_Link}, where the third author showed the existence of $n+1$ inequivalent symplectic structures coming from different fibrations of $M^{(2n)}$.
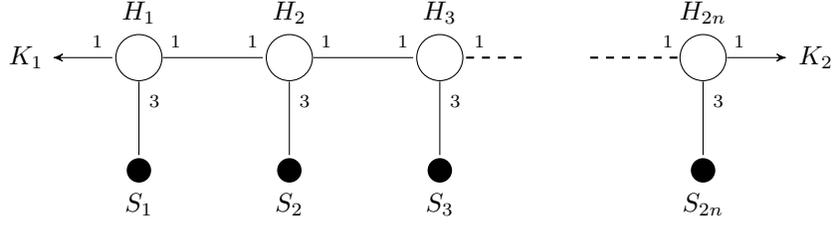
\begin{figure}[h!]
\centering
\begin{tikzpicture}[>=stealth',shorten >=1pt,auto,node distance=2cm]
\node[] (k1) at (-1.5,0){\small $K_1$};
\node[](k2) at (9,0){\small $K_2$};
\node[label={\small$H_1$},scale=1.7,circle, draw](v1) at (0,0) {};
\node[label={\small$H_2$}, scale=1.7, circle, draw](v2) at (2,0) {};
\node[label={\small$H_3$}, scale=1.7, circle, draw](v3) at (4,0) {};
\node[label={\small$H_{2n}$}, scale=1.7, circle, draw](vn) at (7.5,0){};
\node[label=below: {\small$S_1$}, circle, fill=black, scale=0.9](s1) at (0,-1.5){};
\node[label=below: {\small$S_2$}, circle, fill=black, scale=0.9](s2) at (2,-1.5) {};
\node[label=below: {\small$S_3$}, circle, fill=black, scale=0.9](s3) at (4,-1.5) {};
\node[label=below: {\small$S_{2n}$}, circle, fill=black, scale=0.9](sn) at (7.5,-1.5) {};
 \path[<-](k1) edge node[above, xshift=5pt] {\tiny $1$} (v1);
\draw[-] (v1)  to node[very near start]{\tiny $1$} node[very near end]{\tiny $1$}  (v2);
\draw[-] (v2)  to node[very near start]{\tiny $1$} node[very near end]{\tiny $1$}  (v3);
\path[-](v1) edge node[right, yshift=7pt] {\tiny $3$}(s1);
\path[-](v2) edge node[right, yshift=7pt] {\tiny $3$}(s2);
\path[-](v3) edge node[right, yshift=7pt] {\tiny $3$}(s3);
\path[-](vn) edge node[right, yshift=7pt] {\tiny $3$}(sn);
 \draw[dashed, thick] (4.35,0)--(5.2,0) node[draw=none,fill=none,midway,above, xshift=-7pt] {\tiny $1$};
 \draw[dashed, thick] (6,0)--(7.22,0) node[draw=none,fill=none,midway,above, xshift=12pt] {\tiny $1$};
 \draw[->] (vn) edge node[above, xshift=-7pt]{\tiny $1$} (k2);
\end{tikzpicture}
 \caption{Diagram of $K^{(2n)}$}
 \label{graph_link}
\end{figure}
A fibration of $M^{(2n)}$ is given by a choice of $(m_1, m_2)\in H^1(S^3\backslash K^{(2n)}), \mathbb{Z})\cong \mathbb{Z}^2$ satisfying the equations $$3^im_1 + 3^{2n-i+1}m_2\neq 0, \text{\ for  all  } 1\leq i\leq 2n.$$ Details for such a fibration (and graph link theory in general) are worked out by Eisenbud and Neumann in \cite{Eisenbud}. In particular, let $h$ denote the monodromy and $h_*$ the induced map on the homology of the fiber. \cite[Theorem 13.6]{Eisenbud} shows that there is an integer $q$ such that $(h_*^q-1)^2 = 0$. Thus, the Jordan decomposition of $h_*$ only has blocks of size 1 or 2. Furthermore, with the same $q$, \cite{Eisenbud} computed the characteristic polynomial of $h_*|_{\text{Im}(h^q_*-1)}$, denoted $\Delta'(t)$. It turns out that the roots of $\Delta'(t)$ correspond to the eigenvalues of $h_*$ with size 2 Jordan blocks. Moreover, the multiplicity of each root $\lambda_i$ in $\Delta'(t)$ gives the number of size 2 blocks for $\lambda_i$. 
\par We first introduce some notation which will be used in the definition of $\Delta'(t)$. Fix a fibration $(m_1, m_2)$. Let $\mathcal{E} = \{E_1, \cdots, E_{2n-1}\}$ be the set of edges connecting the white nodes in Figure \ref{graph_link}. Specifically, edge $E_i$ connects nodes labeled $H_i$ and $H_{i+1}$. For each $E_i \in \mathcal{E}$, we define an integer $d_{E_i}$ as follows. Take the path in $K^{(2n)}$ from the arrowhead of $K_1$ to halfway through edge $E_i$ (passing through nodes $H_1,H_2,\cdots, H_i$). Let $\ell_{E_i,1}$ denote the product of all weights on edges not contained in the path but are adjacent to vertices in the path. Similarly, we can take the path from the arrowhead of $K_2$ to halfway through edge $E_i$ and define $\ell_{E_i,2}$ analogously. Set $$d_{E_i} = \gcd(m_1\ell_{E_i,1}, m_2\ell_{E_i,2}).$$ Using Figure \ref{graph_link} as reference, we can easily compute that $\ell_{E_i,1} = 3^{i}$ and $\ell_{E_i,2} = 3^{2n-i}$. This simplifies the formula for $d_E$ to 
\begin{equation}\label{$d_E$}
d_{E_i}=\gcd(3^{i}m_1, 3^{2n-i}m_2).
\end{equation}
For each vertex $H_i$, we define an integer $d_{V_i}$ by the formula
\begin{equation}\label{$d_V$}
d_{V_i}=\left\{
 \begin{array}{cc}
 \gcd(d_{E_{i-1}}, d_{E_i}), & 1< i <2n\\
 \gcd(m_1, d_{E_1}),& i= 1\\
 \gcd(m_2, d_{E_{2n-1}}),& i= 2n
 \end{array}\right.
 \end{equation}
 With these definitions in place, the (restricted) characteristic polynomial takes the form
\begin{equation*}
\Delta'(t) = (t^d-1)\prod_{i=1}^{2n-1}(t^{d_{E_i}}-1)/\prod_{i=1}^{2n} (t^{d_{V_i}}-1),
\end{equation*}
where $d=\gcd(m_1,m_2)$. To understand the details of the theorem to follow, we will use the $n=2$ case, i.e. $K^{(4)}$, as an illustrative example. Figure \ref{graph_link2} demonstrates how $d_{E_1}=\gcd(3m_1, 3^3m_2)$ is calculated.
\begin{figure}[h!]
\centering
\begin{tikzpicture}[>=stealth',shorten >=1pt,auto,node distance=2cm]
\node[] (k1) at (-1.5,0){$K_1$};
\node[](k2) at (7.5,0){$K_2$};
\node[label=\small$H_1$,scale=1.7,circle, draw](v1) at (0,0) {};
\node[label=\small$H_2$, scale=1.7, circle, draw](v3) at (2,0) {};
\node[label=\small$H_3$, scale=1.7, circle, draw](v5) at (4,0) {};
\node[label=\small$H_{4}$, scale=1.7, circle, draw](v7) at (6,0){};
\node[label=below: \small$S_1$, circle, fill=black, scale=0.9](s2) at (0,-1.5){};
\node[label=below: \small$S_2$, circle, fill=black, scale=0.9](s4) at (2,-1.5) {};
\node[label=below: \small$S_3$, circle, fill=black, scale=0.9](s6) at (4,-1.5) {};
\node[label=below: \small$S_{4}$, circle, fill=black, scale=0.9](s8) at (6,-1.5) {};
 \path[<-](k1) edge (v1);
  \path[->](v7) edge (k2);
 \path[-] (v3)  edge [above] node {} (v5);
 \path[-] (v5)  edge [above] node {} (v7);
\path[-](v1) edge node[right, yshift=7pt] {\tiny $3$}(s2);
\path[-](v3) edge node[right, yshift=7pt] {\tiny $3$}(s4);
\path[-](v5) edge node[right, yshift=7pt] {\tiny $3$}(s6);
\path[-](v7) edge node[right, yshift=7pt] {\tiny $3$}(s8);
\node[] (e) at (0.9,0){};
\node[] (f) at (1.1,0){};
\path[-, line width=0.1cm] (e)  edge [above] node {} (v1); 
\path[-, line width=0.1cm] (v1)  edge [above] node {} (k1); 
\path[-, line width=0.1cm] (f)  edge [above] node {} (v3); 
\path[-, line width=0.1cm] (v3)  edge [above] node {} (v5); 
\path[-, line width=0.1cm] (v5)  edge [above] node {} (v7);
\path[-, line width=0.1cm] (v7)  edge [above] node {} (k2);
\end{tikzpicture}
 \caption{Paths $\ell_{E_1,1}$ and $\ell_{E_1,2}$ of $d_{E_1}$}
 \label{graph_link2}
\end{figure}
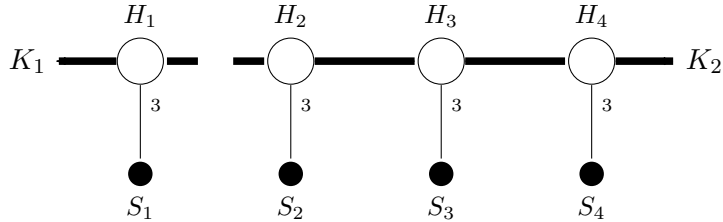

Now, define $X^{(n)}=S^1\times M^{(2n)}$ and let $\deg\Delta'(t)$ denote the degree of the restricted characteristic polynomial $\Delta'(t)$. Since $\deg\Delta'(t)$ is the number of Jordan blocks of size 2, which equals the number of blocks of size {\it at least} 2, it follows
\begin{align*}
p_2^+&=b_2(X^{(n)})+1+\deg\Delta'(t),\\
p_2^-&=b_2(X^{(n)})+\deg\Delta'(t).
\end{align*}
In the case of a fibration represented by coprime $(m_1, m_2)$, there are two possibilities: 3 divides exactly one of $m_1$ or $m_2$,  or 3 neither divides $m_1$ nor $m_2$. It turns out $p_2^+$ can distinguish these two possibilities and in the first case provides information about the power of $3$ dividing $m_1$ or $m_2$. We give the exact statement below.
\begin{Th}\label{GL-thm}
Let $(m_1,m_2)$ be coprime, representing a fibration of $M^{(2n)}$. By reversing the roles of $m_1$ and $m_2$ if necessary, we write $m_1= 3^kq$ with $\gcd(q,3)=1$ and assume $\gcd(3,m_2)=1$. It follows that
\[
p_2^+=\left\{ \begin{array}{lc}b_2(X^{(n)})+ 3^{n+k-\lceil \frac{k}{2}\rceil}-3^k, &k\leq 2n-4\\b_2(X^{(n)})+2\cdot 3^k, &2n-3\leq k\leq 2n-2\\b_2(X^{(n)})+1,& k\geq 2n-1 \end{array}\right.\]
\end{Th}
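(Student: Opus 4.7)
The plan is to derive the result directly from the formula $p_2^+=b_2(X^{(4)})+1+\deg\Delta'(t)$ stated just before the theorem, so the whole problem reduces to computing the degree of
\[
\Delta'(t)=(t^d-1)\prod_{i=1}^{3}(t^{d_{E_i}}-1)\Big/\prod_{i=1}^{4}(t^{d_{V_i}}-1),
\]
which, because this is a polynomial of degree equal to the sum of exponents, is simply
\[
\deg\Delta'(t)=d+\sum_{i=1}^{3}d_{E_i}-\sum_{i=1}^{4}d_{V_i}.
\]
With $(m_1,m_2)$ coprime we already have $d=1$, so the entire task is to evaluate the six $\gcd$'s appearing in (\ref{$d_E$}) and (\ref{$d_V$}) for $n=2$ and add them up with the appropriate signs.

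The argument then proceeds by case analysis on $k=v_3(m_1)$. Writing $m_1=3^kq$ with $\gcd(q,3)=1$ and using $\gcd(m_2,3)=1$ together with coprimality, I would reduce each $d_{E_i}=\gcd(3^{i}m_1,3^{4-i}m_2)$ to the form $3^{\min(i+k,\,4-i)}$ by separating the $3$-part from the part coprime to $3$ (since $\gcd(q,m_2)=1$, the coprime-to-$3$ factor is always $1$). This gives:
\[
(d_{E_1},d_{E_2},d_{E_3})=\bigl(3^{\min(k+1,3)},\,3^{\min(k+2,2)},\,3^{\min(k+3,1)}\bigr),
\]
which for $k=0,1,2,\ge 3$ produces $(3,9,3),(9,9,3),(27,9,3),(27,9,3)$ respectively. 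A parallel computation using (\ref{$d_V$}) gives
\[
(d_{V_1},d_{V_2},d_{V_3},d_{V_4})=\bigl(3^{\min(k,1)},\,\min(d_{E_1},d_{E_2}),\,\min(d_{E_2},d_{E_3}),\,1\bigr),
\]
yielding $(1,3,3,1),(3,9,3,1),(9,9,3,1),(27,9,3,1)$ for $k=0,1,2,\ge3$.

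Plugging the four tuples into $\deg\Delta'(t)=1+\sum d_{E_i}-\sum d_{V_i}$ gives $8,6,18,0$, so that $p_2^+-b_2(X^{(4)})=1+\deg\Delta'(t)$ equals $9,7,19,1$, matching the four cases claimed. The last case $k\ge 3$ works uniformly because once $k+1\ge 3$ every $3$-adic valuation in $d_{E_i}$ and $d_{V_i}$ is already capped by the valuation of $3^{4-i}m_2$ or of the neighboring $d_{E_j}$, so the contributions from $d_{V_1}=27$ and $d_{E_1}=27$ exactly cancel the leftover $3$-power in $d_{E_i}-d_{V_{i+1}}$ and the sum collapses to $0$.

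The routine but genuinely fiddly step is the bookkeeping of these $3$-adic valuations, especially verifying the boundary case $k\ge 3$ where several min's saturate; I expect that to be the only real obstacle, and I would organize the computation in a single table indexed by $k\in\{0,1,2,\ge3\}$ with columns $d_{E_1},d_{E_2},d_{E_3},d_{V_1},d_{V_2},d_{V_3},d_{V_4}$, so that the four evaluations of $\deg\Delta'(t)$ can be read off by a single arithmetic line each. No further geometric input about the graph link or the monodromy beyond the characteristic polynomial formula of Eisenbud--Neumann recalled in the section is needed.
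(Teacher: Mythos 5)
Your proposal is correct and follows essentially the same route as the paper: evaluate the six gcd's $d_{E_i}$, $d_{V_i}$ from (\ref{$d_E$}) and (\ref{$d_V$}) by separating $3$-adic valuations (using $\gcd(q,m_2)=1$), then read off $\deg\Delta'(t)$ and apply $p_2^+=b_2(X^{(4)})+1+\deg\Delta'(t)$; the paper merely organizes this as two cases ($k=0$ and $k>0$) and simplifies $\Delta'(t)$ to an explicit polynomial rather than summing exponents. One typo to fix: your displayed formula $d_{V_1}=3^{\min(k,1)}$ should read $d_{V_1}=\gcd(3^kq,\,3^{\min(k+1,3)})=3^{\min(k,3)}$, which is what your listed tuples $(1,3,9,27)$ and the final arithmetic actually use.
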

\begin{proof}
For convenience, we denote by $r_k$ the quantity $\lfloor\frac{2n-k}{2}\rfloor$. Note that $r_k$ is chosen such that $\min(k+i, 2n-i) = k+i$ for $i\leq r_k$. With this fact in mind, we first expand the expressions for $d_{E_i}$:
$$ d_{E_i} = \gcd(3^{k+i}q, 3^{2n-i}m_2) = 3^{\min(k+i, 2n-i)}$$
A simple consideration shows
\[ d_{E_i} = \begin{cases} 
      3^{k+i}, & i \leq r_k \\
      3^{2n-i}, & i > r_k
   \end{cases}
\]

Next, we expand the expressions for $V_i$. For $1 < i < 2n$, we have 
\[d_{V_i} =\gcd(d_{E_{i-1}}, d_{E_i}) = \begin{cases} 
      d_{E_{i-1}}, & i \leq r_k \\
      d_{E_i}, & i > r_k
   \end{cases}
\]
Verifying the cases where $i \leq r_k$ or $i-1 > r_k$ is straightforward. For the case $i=r_k+1$, we have $d_{V_i} = 3^{\min(k+r_k, 2n-r_k-1)}$. A case by case analysis on the parity of $k$ reveals
\begin{align*}
&r_k \geq n - \frac{k}{2} - \frac{1}{2}\\
& \implies r_k + r_k \geq 2n - k - 1\\
& \implies r_k + k \geq 2n - r_k - 1
\end{align*}
and so $d_{V_i} = 3^{2n-r_k-1} = d_{E_{r_{k+1}}} = d_{E_i}$ as required. It remains to compute $d_{V_1}$ and $d_{V_{2n}}$.
\begin{align*}
d_{V_1}&= \gcd(m_1, d_{E_1})=\gcd(3^kq, 3^{\min(k+1, 2n-1)})=3^{\min(k, 2n-1)},\\
d_{V_{2n}}&= \gcd(m_2, d_{E_{2n-1}})  = \gcd(m_2, 3^{\min(k+2n-1, 1)})=1,
\end{align*}
where $d_{V_{2n}}$ follows from the fact that $\gcd(m_2, 3) = 1$.
With these definitions, we turn to the formula for $\Delta'(t)$. Its expansion will depend on whether or not there is some index $1<i<2n$ for which $i = r_k$. In particular, such an index exists only if $r_k >1$.
\begin{Case} ($r_k>1$)
We have 
\begin{align*}
\Delta'(t)&=\frac{(t-1)(t^{d_{E_1}}-1)\cdots(t^{d_{E_{r_k}}}-1)\cdots (t^{d_{E_{2n-1}}}-1)}{(t^{d_{V_1}}-1)\cdots (t^{d_{V_{r_k}}}-1)\cdots (t^{d_{V_{2n}}}-1)}\\
&=\frac{(t^{d_{E_{r_k}}}-1)}{(t^{d_{V_1}}-1)} = \frac{(t^{3^{k+r_k}}-1)}{(t^{3^{\min(k, 2n-1)}}-1)} \\
&=\frac{(t^{3^{k+r_k}}-1)}{(t^{3^k}-1)},
\end{align*}
since for $r_k \geq 1$ we have $k < k+1 < 2n-1$.
\end{Case}
\begin{Case} ($r_k\leq 1$)
Now we have $d_{E_i} = d_{V_i}$ for all $1<i<2n$. Furthermore,
\[ d_{E_1} = \begin{cases} 
      3^{k+1}, & r_k = 1 \\
      3^{2n-1}, & r_k < 1.
   \end{cases}
\]
Recall from the definition of $r_k$, we also have
\[ \min(k, 2n-1) = \begin{cases} 
      k, & r_k = 1 \\
      2n-1, & r_k < 1.
   \end{cases}
\]
Combining the above equalities yields,   
\[ \Delta'(t)=\frac{t^{d_{E_1}}-1}{t^{d_{V_1}}-1} = \frac{t^{d_{E_1}}-1}{t^{3^{\min(k, 2n-1)}}-1}=\begin{cases} 
      \frac{t^{3^{k+1}}-1}{t^{3^k}-1}, & r_k = 1 \\
      1, & r_k < 1.
   \end{cases}
\]
\end{Case}
To acquire a more straightforward understanding of the three cases $r_k <1, r_k = 1$, and $r_k >1$ let us rewrite $r_k = \lfloor \frac{2n-k}{2}\rfloor = n - \lceil \frac{k}{2}\rceil$. Then $r_k = 1$ implies $\lceil \frac{k}{2}\rceil=n-1$ and so $k = 2n-2$ or $2n-3$. Finally, we conclude 
\begin{align}\label{degDp}
 \deg \Delta'(t) = \begin{cases} 
      3^{n+k-\lceil \frac{k}{2}\rceil} - 3^k, & k \leq 2n-4 \\
      2\cdot 3^k, & 2n-3\leq k\leq 2n-2\\
      0, & k\geq 2n-1,
   \end{cases}
\end{align}
from which the result follows.
\end{proof}

To illustrate some of the details in the proof of Theorem \ref{GL-thm}, we calculate the $K^{(4)}$ case explicitly. Here, we have $r_k = 2 - \lceil\frac{k}{2}\rceil$ and so $r_0 = 2$, $r_1=r_2=1$ and $r_i <1$ for $i\geq 3$. Note that $r_k \leq 1$ for all $k >0$. In particular, for $k >0$ and $i>1$ we should expect $d_{E_i} = 3^{4-i}=d_{V_i}$. To confirm, we compute
\begin{align*}
d_{E_2}&=\gcd(3^{k+2}q,3^2)=3^2,\\
d_{E_3}&=\gcd(3^{3+k}q,3)=3,\\
d_{V_2}&=\gcd(\min(3^{k+1},3^3),3^2)=\min(3^{k+1},3^2)=3^2,\\
d_{V_3}&=\gcd(3^2,3)=3,\\
d_{V_4}&=\gcd(m_2,3)=1.
\end{align*}
For $i=1$, we have 
\begin{align*}
d_{E_1}&=\gcd(3^{k+1}q,3^3m_2) =\min(3^{k+1}, 3^3),\\
d_{V_1}&=\gcd(m_1, d_{E_1}) =\gcd(3^kq, 3^{\min(3, k+1)})={\min(3^3, 3^k)}.
\end{align*} 
We can now explicitly compute $\Delta'(t)$:
\begin{align*}
\Delta'(t)&=\frac{(t-1)(t^{\min(3^{k+1},3^3)}-1)(t^9-1)(t^3-1)}{(t^{\min(3^k,3^3)}-1)(t^{9}-1)(t^3-1)(t-1)}\\
&=\frac{t^{3^2\min(3^{k-1},3)}-1}{t^{3\min(3^{k-1},3^2)}-1}\\
&= \left\{ \begin{array}{cc}t^6+t^3+1,&k=1\\t^{18}+t^9+1,&k=2\\1,& \;k\geq 3. \end{array}\right.
\end{align*}
Indeed, this calculation agrees with the general formula \eqref{degDp}
\begin{align*}
\deg(\Delta'(t)) &= \left\{ \begin{array}{cc}6 = 2\cdot3,&k=1\\18=2\cdot3^2,&k=2\\0,& \;k\geq 3.
\end{array}\right.
\end{align*}

We will also use this $K^{(4)}$ example to demonstrate another interesting property of the $X^{(2n)}$ family of open symplectic 4-manifolds.  It can be seen in this $n=2$ case that every possible value of $p_2^+$, in Theorem \ref{GL-thm}, may be achieved for fibrations coming from any of the faces on the Thurston ball. In Figure \ref{K4ball} below, we give the diagram of the Thurston ball for $M^{(4)}$ taken from \cite[Figure 3]{Graph_Link}.
 \begin{figure}[H]
 \includegraphics[scale=0.75]{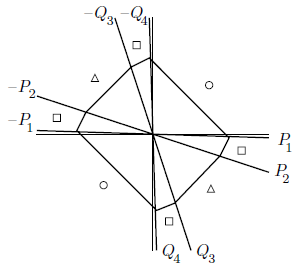}
 \caption{Thurston ball for $M^{(4)}$}\label{K4ball}
 \end{figure}
 The eight faces come in three types, denoted by the circular, square, and triangular symbols. The open cone over each face is determined by the rays passing through the primitive elements 
 \begin{align*}
 P_i&= (3^{5-2i}, -1),\\
 Q_i&= (1, -3^{2i-5}).
 \end{align*}
Consider, for example, the open cone over the circular face in the first quadrant, as the consideration for the other two types of faces are similar. This region is defined by the elements $\{\left.(x,y) \right| -\frac{1}{27}x < y < 27x,\, 0 <x <\infty\}$, laying between the $P_1$ and $-Q_4$ rays. Furthermore, it contains the points $\phi_i = (3^i, 1)$ for $0 \leq i \leq 3$. By Theorem \ref{GL-thm}, the corresponding symplectic 4-manifolds $(X^{(4)}, \omega_i)$ have the following primitive dimension: 
 \begin{align*}
 p_2^+(X^{(4)}, \omega_1)&= b_2 +9,\\
  p_2^+(X^{(4)}, \omega_2)&= b_2 +7,\\
   p_2^+(X^{(4)}, \omega_3)&= b_2 +19,\\
    p_2^+(X^{(4)}, \omega_4)&= b_2 +1.
 \end{align*}  
Moreover, by symmetry of the Thurston ball, the fibrations $-\phi_i$ lie on the cone over the other circular face and also satisfy the above equations. Hence, the pair of circular faces have fibrations with all possible dimensions of $p_2^+$ given in Theorem \ref{GL-thm}. A similar analysis shows the same property holds true for the square and triangular faces as well.

Since fibrations within a single face of the Thurston ball are all equivalent, i.e. there is a smooth isotopy on the one-form $d\pi$ linking any two fibrations, the above provides examples of symplectic 4-manifolds from equivalent 3-manifold fibrations having different primitive cohomology.  
 
\section{Discussion}\label{sec-D}
We conclude with some remarks. Recall from Section \ref{examples} that the toroidal and spherical fibrations came from inequivalent faces of the Thurston ball. The cohomological value $p_2^+$ takes different values for these two types of fibration. The ability to distinguish these two distinct faces in this McMullen-Taubes example arises from the fact that one monodromy is Torelli, while the other is not. In fact, $p_2^+$ will {\it always} differ for a Torelli and non-Torelli fibration. 

\par On the other hand, we have also seen from the graph link family of examples that the value of $p_2^+$ can differ quite a bit for fibrations coming from the {\it same} face of the Thurston ball.  For sure, $p_2^+$ can give more refined information on each face of the Thurston ball; within a face,  $p_2^+$ can take on different values for various fibrations of $X$ whereas (the classical Betti number) $b_2$ does not.

\par  It is still however an open question as to how this distinguishing value $p_2^+$ can be used with other invariants to probe more deeply the relation between different fibrations and their associated symplectic structures. As mentioned, starting with a surface $\Sigma$ and a monodromy $f$, we can construct a $Y_f$ and an associated symplectic manifold $(X, \omega_f)$. The dimension $p_2^+$ ultimately comes down to the algebraic structure of the monodromy $f$ on the surface. But as the graph links examples showed, the value of $p_2^+$ alone is not sufficient in determining which face the monodromy corresponds to. We do expect that the other invariants associated with the underlying $A_3$-algebra on $PH^*(X, \omega_f)$ \cite{TsengIII} will be able to provide even more refined information. The $A_3$-algebra on the primitive differential forms not only induce a product structure on $PH^*(X, \omega_f)$, but it is also possible to define Massey products on $PH^*(X, \omega_f)$ as well. And as a specific example of how they may provide more information, the triple Massey product can be shown to count the number of Jordan blocks of size exactly three. (See \cite{Gibson} for more information.)  It would be interesting to build up a dictionary relating the primitive cohomological invariants of the symplectic 4-manifold with structures on the fibered 3-manifold, and that of the monodromy $f$, an element of the mapping class group. 
\section*{Appendix}
Here we provide the details of setting up the fibration structure and converting monodromies appropriately so that they can be entered into SnapPy.
Let $\mathbb{T}^3$ denote the $3$-torus. We view it as the cube $[0,1]^3$ under the identification $(x,y,z) \sim (x+p, y+q,z+r)$ for integers $p,q,r$. The axes $i,j,k$ and their sum $i+j+k$ form four lines in the cube $L_1, L_2, L_3, L_4$, respectively. By choosing different bases $(v_1, v_2, v_3)$ for the cube and displacing the four lines we may fiber $\mathbb{T}^3-\{L_1,L_2,L_3,L_4\}$ in different ways as follows. First we shift the four lines from the origin by 
\begin{align*}
L_1&=(x, -\epsilon, 3\epsilon),\\
L_2&=(\epsilon,y,-3\epsilon),\\
L_3&=(-\epsilon, \epsilon, z),\\
L_4&= (x=y=z).
\end{align*}
\par Next we choose a basis $v_1 = (a_1, a_2, a_3)$, $v_2=(1,1,0)$, $v_3=(0,1,1)$. Initially $v_1$ may be any vector which gives a non-zero determinant, specifically, $a_1-a_2+a_3\neq 0$. For brevity, let us denote $A:= det(v_1,v_2,v_3) = a_1-a_2+a_3$. Choosing to fiber along $v_1$, each fiber has the form $\Sigma_t = tv_1 + \alpha v_2 + \beta v_3$ for $t\in [0,1]$. $\Sigma_t$ is $\mathbb{T}^2$ with four punctures denoted $x_1(t), x_2(t), x_3(t), x_4(t)$ coming from the respective lines $L_i$. To verify that each line $L_i$ intersects the fiber exactly once we must solve the following system of equations:
\begin{align*}
L_1:& \begin{pmatrix}1& 1\\0&1 \end{pmatrix}\begin{pmatrix}\alpha\\ \beta \end{pmatrix}= \begin{pmatrix} -\epsilon-ta_2\\ 3\epsilon-ta_3\end{pmatrix}\\
L_2:& \begin{pmatrix}1& 0\\0&1 \end{pmatrix}\begin{pmatrix}\alpha\\ \beta \end{pmatrix}= \begin{pmatrix} \epsilon-ta_1\\ -3\epsilon-ta_3\end{pmatrix}\\
L_3:& \begin{pmatrix}1& 0\\1&1 \end{pmatrix}\begin{pmatrix}\alpha\\ \beta \end{pmatrix}= \begin{pmatrix} -\epsilon-ta_1\\ \epsilon-ta_2\end{pmatrix}\\
L_4:& \begin{pmatrix}0& -1\\1&-1 \end{pmatrix}\begin{pmatrix}\alpha\\ \beta \end{pmatrix}= \begin{pmatrix} t(a_2-a_1)\\ t(a_3-a_1)\end{pmatrix}
\end{align*}
Solving these systems for the $(\alpha, \beta)$ coordinates of the marked points $x_i(t)$ yields
\begin{align*}
x_1(t)&=(-4\epsilon, 3\epsilon) + (a_3-a_2,-a_3)t,\\
x_2(t)&=(\epsilon,-3\epsilon)+(-a_1,-a_3)t,\\
x_3(t)&=(-\epsilon, 2\epsilon) + (-a_1, a_1-a_2)t,\\
x_4(t)&=(0,0)+(a_3-a_2,a_1-a_2)t.
\end{align*}
To align with the notation of \cite{Birman}, we relabel the points with respect to their first coordinate position, in increasing order, as $y_1(t) = x_1(t)$, $y_2(t) = x_3(t)$, $y_3(t)=x_4(t)$, $y_4(t)=x_2(t)$. Under this new setting, the formulas for the points become
\begin{align*}
y_1(t)&=(-4\epsilon, 3\epsilon) + (a_3-a_2,-a_3)t,\\
y_2(t)&=(-\epsilon, 2\epsilon) + (-a_1, a_1-a_2)t,\\
y_3(t)&=(0,0)+(a_3-a_2,a_1-a_2)t,\\
y_4(t)&=(\epsilon,-3\epsilon)+(-a_1,-a_3)t.\\
\end{align*}
\par Next, we verify that none of the $y_i(t)$ intersect for any value of $t$. Notice $y_2$ and $y_3$ have the same second component in the $t$ variable but differ by the $\epsilon$-term constant so they will never intersect. We can apply a similar argument to the pairs $(y_1,y_3),(y_1,y_4)$, and $(y_2,y_4)$. Lastly, by considering the (separate) systems of equations $y_1(t)=y_2(t)$ and $y_3(t)=y_4(t)$, one can easily see no solution exists.
\par Let $\Sigma_{1,4}$ be the $2-$torus with four punctures and $\text{Mod}(\Sigma_{1,4})$ its mapping class group (which fixes the punctures setwise). Furthermore, let $\mathcal{P}\text{Mod}(\Sigma_{1,4})$ denote the {\it pure} mapping class group, the set of mapping class elements fixing the punctures pointwise. We set 
 \begin{equation}\label{generators}
 H_1(\Sigma) = \langle a_0, a_1, a_2, a_3, b_0\rangle,
 \end{equation}
 where $a_i$ is the homology curve between punctures $i$ and $i+1$ for $i>0$ and $a_0$ is between marked point 1 and 4. $b_0$ is the homology longitudinal curve, not enclosing any punctures. These curves have algebraic intersection numbers $a_i\cdot a_j= 0$ for $i\neq j$ and $a_i \cdot b_0 = 1$. \cite{Birman} introduces the following elements (pictured below) and shows the Dehn twists along them that generate the pure mapping class group. In our setting we have $\mathcal{P}\text{Mod}(\Sigma_{1,4}) = \langle \mathcal{P}ush(\rho_i), \mathcal{P}ush(\tau_i) \rangle$, $1\leq i \leq 4$. Here, $\mathcal{P}ush(\gamma)$ is the point pushing map along $\gamma$. We also summarize some of the important relations to be used later:
 \begin{align*}
 &[\tau_i, \tau_j] = [\rho_i, \rho_j] = 1,\\
 &A_{ij} = \rho_i\tau_j^{-1}\rho_i^{-1}\tau_j, \quad C_{ij} = \tau_i\rho_j^{-1}\tau_i^{-1}\rho_j,\\
&\text{for\ } 1\leq i<j<k \leq 4.
 \end{align*}
 \begin{figure}[h!]\label{gens}
 \includegraphics[scale=0.5]{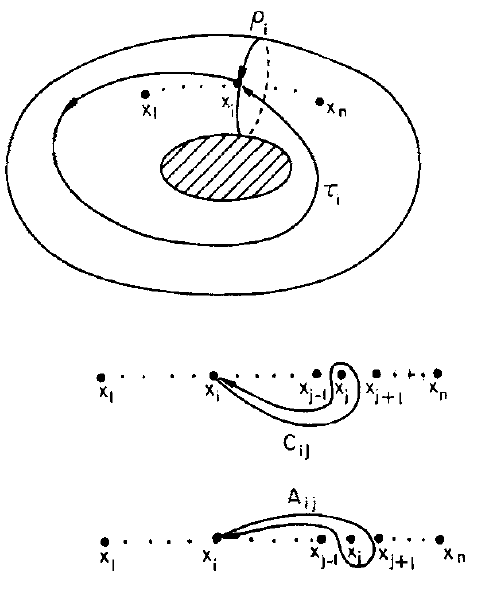}
 \caption{Diagram of generators taken from \cite{Birman}}
 \end{figure}
For a more in depth discussion and outline of a proof for these identities, see \cite{Birman}. We note that the formulas here differ slightly from \cite{Birman} as our choice of orientation is not the same. Moreover, we use functional composition, (right to left) as opposed to algebraic. In order to use SnapPy \cite{SnapPy}, we need to express $\mathcal{P}ush(\rho_i)$ and  $\mathcal{P}ush(\tau_i)$ in terms of Dehn twists along the curves in (\ref{generators}). The trick is to use the following fact (4.7 proven in \cite{Farb}), which states
 \begin{fact*}
 Let $\alpha$ be a simple loop in a surface $S$ representing an element of $\pi_1(S, x)$, Then 
 $\mathcal{P}ush([\alpha]) = T_aT_b^{-1},$
 where $a$ and $b$ are isotopy classes of the simple closed curves in $S-x$ obtained by pushing $\alpha$ off itself to the left and right, respectively.
 \end{fact*}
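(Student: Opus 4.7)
The plan is to prove the identity by localizing the problem to a tubular neighborhood of $\alpha$, identifying that neighborhood with a pair of pants after removing $x$, and computing in the mapping class group of the pair of pants.

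\textbf{Step 1: Localization.} Choose a closed annular neighborhood $N$ of $\alpha$ in $S$, diffeomorphic to $S^1\times[-1,1]$, with $\alpha$ the core $S^1\times\{0\}$ and $x\in\alpha$. Fix orientations so that the left and right pushoffs $a$ and $b$ are identified with $S^1\times\{1/2\}$ and $S^1\times\{-1/2\}$, respectively. The push-map $\mathcal{P}ush([\alpha])$ is realized as the endpoint of an ambient isotopy of $S$ dragging $x$ once around the core, and this isotopy can be taken supported in $N$; let $\phi$ denote the resulting diffeomorphism. The Dehn twists $T_a$ and $T_b$ are also supported in $N$. Hence the whole identity reduces to one in the mapping class group of $N\setminus\{x\}$ relative to $\partial N$.

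\textbf{Step 2: Pair-of-pants reduction.} The surface $P=N\setminus\{x\}$ is a pair of pants with boundary $\partial N\sqcup c$, where $c$ is a small loop around $x$. Its mapping class group rel $\partial P$ is free abelian of rank three, generated by $T_a$, $T_b$, $T_c$. Under the inclusion $P\hookrightarrow S\setminus\{x\}$ the generator $T_c$ becomes trivial (a Dehn twist about a puncture is isotopic to the identity). It therefore suffices to show $[\phi]\equiv T_aT_b^{-1}\pmod{\langle T_c\rangle}$ in this rank-three group.

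\textbf{Step 3: Explicit model and arc comparison.} Model $\phi$ on $N$ by $\phi(\theta,s)=(\theta+f(s),s)$, where $f\colon[-1,1]\to\mathbb{R}$ is smooth with $f(s)=2\pi$ near $s=0$, $f(s)=0$ near $s=\pm 1$, and $f$ monotone on each of $[-7/8,-1/8]$ and $[1/8,7/8]$. The straight-line isotopy $\phi_t(\theta,s)=(\theta+tf(s),s)$ drags $x$ once around the core, so $[\phi]=\mathcal{P}ush([\alpha])$. Now pick a properly embedded arc $\gamma_0=\{\theta_0\}\times[-1,1]$ in $P$ with $\theta_0$ opposite to the position of $x$, and compute $\phi(\gamma_0)$: on the lower half $s\in[-7/8,-1/8]$ it performs one full wrap around the core staying on the $b$-side of $x$, and on the upper half $s\in[1/8,7/8]$ it undoes that wrap on the $a$-side of $x$. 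Compute $T_aT_b^{-1}(\gamma_0)$ directly from the support of $T_a$ and $T_b$; the two arcs are isotopic in $P$ rel endpoints. Since $\mathrm{Mod}(P,\partial P)/\langle T_c\rangle\cong\mathbb{Z}^2$ injects into the set of isotopy classes of such transverse arcs, this forces $[\phi]=T_aT_b^{-1}$ in $\mathrm{Mod}(S\setminus\{x\})$.

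\textbf{Main obstacle.} The delicate part is sign bookkeeping: the direction of traversal of $\alpha$, the convention for left versus right pushoff, and the sign convention for Dehn twists must all be pinned down before Step 3 in order to land on $T_aT_b^{-1}$ rather than $T_a^{-1}T_b$. I would fix these at the outset (orient $\alpha$ positively, take $a$ on the left with respect to this orientation and the orientation of $S$, and use right-handed Dehn twists) and then verify the signs by explicit inspection of which side of $x$ each of the two half-wraps of $\phi(\gamma_0)$ lies on. Once the calibration is correct, the rank-three structure of $\mathrm{Mod}(P,\partial P)$ closes the argument.
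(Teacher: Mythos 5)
The paper does not actually prove this statement: it is quoted verbatim as Fact 4.7 of Farb--Margalit \cite{Farb} and used as a black box. Your argument is correct and is essentially the standard proof given in that reference: realize $\mathcal{P}ush([\alpha])$ by a ``spin'' supported in an annular neighborhood $N$ of $\alpha$, observe that the time-one map is supported in two disjoint sub-annuli on either side of $x$ where it restricts to oppositely-handed twists about the two pushoffs, and conclude $T_aT_b^{-1}$ after fixing orientation conventions. The only step I would tighten is the injectivity claim in Step 3: rather than asserting that $\mathrm{Mod}(P,\partial P)/\langle T_c\rangle$ injects into isotopy classes of transverse arcs, note that cutting $P$ along $\gamma_0$ yields an annulus with core $c$, so the stabilizer of $[\gamma_0]$ in $\mathrm{Mod}(P,\partial P)\cong\mathbb{Z}^3$ is exactly $\langle T_c\rangle$, which gives the needed freeness of the $\mathbb{Z}^2$-action on the orbit of $[\gamma_0]$. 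Alternatively, your explicit model $\phi(\theta,s)=(\theta+f(s),s)$ already exhibits the time-one map as the literal composition of a twist supported in $S^1\times[1/8,7/8]$ with an inverse twist supported in $S^1\times[-7/8,-1/8]$, so Steps 2--3 can be shortened to reading off $\phi=T_a^{\epsilon}T_b^{-\epsilon}$ directly and then calibrating $\epsilon$.
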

 That is, we take an annular neighborhood of $\alpha$ bounded by curves $a$ and $b$ and then take the product of their Dehn and inverse Dehn twists, respectively. From this construction, we can immediately obtain that 
\begin{equation}
\mathcal{P}ush(\rho_i) = T_{a_{i-1}}T_{a_i}^{-1}.
 \end{equation}
 \par For the $\tau_i$ curves, we need to find an annular boundary to work with. We introduce the longitudinal homology curves $b_i$, which enclose the punctures $1,2, \cdots, i$ [``over'' $1,2,\cdots, i$ and ``under'' $i+1,\cdots, 4$]. Thus $b_0$ agrees with the previous homology generator introduced, $b_1$ passes over puncture 1 and misses 2,3,4, and so on. The point of introducing these curves is that now $\tau_i$ has an annular neighborhood bounded by $b_{i-1}$ and $b_i$. By consulting the diagrams to determine proper orientation it follows that 
 \begin{equation}\label{tauPush}
\mathcal{P}ush(\tau_i) = T_{b_i}T_{b_{i-1}}^{-1}.
 \end{equation}
 \par Next, we need to convert Equation \ref{tauPush} into Dehn twists only involving the homology generators given in \ref{generators}. First, we observe that we may express $[b_i] = [a_0]+[b_0]-[a_i]$, which can be verified by constructing the fundamental square for the torus with the relevant curves. An example diagram in Figure \ref{Example} is given for the $[b_1]$ case.
 \begin{figure}[h!]
 \centering
 \begin{tikzpicture}
 \draw (0,0) grid +(3,3);
\fill (canvas cs: x=0.5cm, y =2.5cm) circle (2pt);
\fill (canvas cs: x=1.5cm, y =1.5cm) circle (2pt);
\fill (canvas cs: x=2.5cm, y =0.5cm) circle (2pt);
\draw[->>, line width=0.5mm, red] (3,2)--(3,3);
\draw[->>, line width=0.5mm, red] (3,3)--(1,3);
\draw[->>,line width=0.5mm, red] (1,3)--(1,0);
\draw[->>, line width=0.5mm, red] (1,0)--(0,0);
\draw[->>, line width=0.5mm, red] (0,0)--(0,2);
\draw[->>, line width=0.5mm, blue] (3,2)--(0,2);
\node[] at (1.5,2.2) {$b_1$};
\node[] at (-0.2,1.2) {$a_0$};
\node[] at (2,3.2) {$b_0$};
\node[] at (0.7,1.5) {$-a_1$};
 \end{tikzpicture}
 \caption{Diagram for $b_1$ Expression}\label{Example}
 \end{figure}
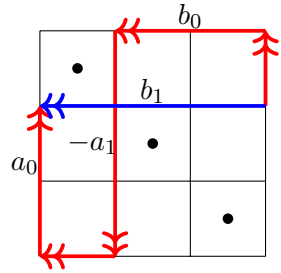
  One can straightforwardly check that $T_{a_i}T_{b_0}([a_0]) = [a_0]+[b_0]-[a_i] = [b_i]$. Fact 3.7 in \cite{Farb} states $T_{f(a)} = fT_af^{-1}$, which we can apply to our situation by setting $a= a_0$ and $f=T_{a_i}T_{b_0}$. This fact then yields 
\begin{equation}\label{sub}
 T_{b_i} = T_{a_i}T_{b_0}T_{a_0}T_{b_0}^{-1}T_{a_i}^{-1}.
 \end{equation}
 Finally, substituting formula \ref{sub} into equation \ref{tauPush} leads to our desired expression
 \begin{equation}\label{newTauPush}
 \mathcal{P}ush(\tau_i)=T_{a_{i-1}}T_{b_0}T_{a_0}^{-1}T_{b_0}^{-1}T_{a_{i-1}}^{-1}T_{a_i}T_{b_0}T_{a_0}T_{b_0}^{-1}T_{a_i}^{-1}.
 \end{equation}
 %

%
\bigskip\bigskip\bigskip

\begin{bibdiv}
\begin{biblist}
\bib{Birman}{article}{
   author={Birman, J. S.},
   title={On braid groups},
   journal={Comm. Pure Appl. Math.},
   volume={22},
   date={1969},
   pages={41--72},
   issn={0010-3640},
   review={\MR{0234447}},
   doi={10.1002/cpa.3160220104},
}
\bib{SnapPy}{misc}{
     author={Culler, M.},
     author={Dunfield, N. M.},
     author={Goerner, M.},
     author={Weeks, J. R.},
     title={Snap{P}y, a computer program for studying the geometry and topology of $3$-manifolds},
     note={Available at \url{http://snappy.computop.org}},
}
\bib{Eisenbud}{book}{
   author={Eisenbud, D.},
   author={Neumann, W.},
   title={Three-dimensional link theory and invariants of plane curve
   singularities},
   series={Annals of Mathematics Studies},
   volume={110},
   publisher={Princeton University Press, Princeton, NJ},
   date={1985},
   pages={vii+173},
   isbn={0-691-08380-0},
   isbn={0-691-08381-9},
   review={\MR{817982}},
}
\bib{Farb}{book}{
   author={Farb, B.},
   author={Margalit, D.},
   title={A primer on mapping class groups},
   series={Princeton Mathematical Series},
   volume={49},
   publisher={Princeton University Press, Princeton, NJ},
   date={2012},
   pages={xiv+472},
   isbn={978-0-691-14794-9},
   review={\MR{2850125}},
}
\bib{Gibson}{book}{
   author={Gibson, M.},
   title={Properties of the $A_\infty$-Structure on Primitive Forms and Its
   Cohomology},
   note={Thesis (Ph.D.)--University of California, Irvine},
   publisher={ProQuest LLC, Ann Arbor, MI},
   date={2019},
   pages={83},
   isbn={978-1687-92504-6},
   review={\MR{4051258}},
}
%
%
\bib{McT}{article}{
   author={McMullen, C. T.},
   author={Taubes, C. H.},
   title={4-manifolds with inequivalent symplectic forms and 3-manifolds
   with inequivalent fibrations},
   journal={Math. Res. Lett.},
   volume={6},
   date={1999},
   number={5-6},
   pages={681--696},
   issn={1073-2780},
   review={\MR{1739225}},
   doi={10.4310/MRL.1999.v6.n6.a8},
}
\bib{Neumann}{article}{
   author={Neumann, W. D.},
   title={Splicing algebraic links},
   conference={
      title={Complex analytic singularities},
   },
   book={
      series={Adv. Stud. Pure Math.},
      volume={8},
      publisher={North-Holland, Amsterdam},
   },
   date={1987},
   pages={349--361},
   review={\MR{894301}},
}
%
%
%
%
\bib{TsengIII}{article}{
   author={Tsai, C.-J.},
   author={Tseng, L.-S.},
   author={Yau, S.-T.},
   title={Cohomology and Hodge theory on symplectic manifolds: III},
   journal={J. Differential Geom.},
   volume={103},
   date={2016},
   number={1},
   pages={83--143},
   issn={0022-040X},
   review={\MR{3488131}},
}
\bib{TsengII}{article}{
   author={Tseng, L.-S.},
   author={Yau, S.-T.},
   title={Cohomology and Hodge theory on symplectic manifolds: II},
   journal={J. Differential Geom.},
   volume={91},
   date={2012},
   number={3},
   pages={417--443},
   issn={0022-040X},
   review={\MR{2981844}},
}
\bib{Graph_Link}{article}{
   author={Vidussi, S.},
   title={Homotopy $K3$'s with several symplectic structures},
   journal={Geom. Topol.},
   volume={5},
   date={2001},
   pages={267--285},
   issn={1465-3060},
   review={\MR{1825663}},
   doi={10.2140/gt.2001.5.267},
}
\bib{Vidussi}{article}{
   author={Vidussi, S.},
   title={Smooth structure of some symplectic surfaces},
   journal={Michigan Math. J.},
   volume={49},
   date={2001},
   number={2},
   pages={325--330},
   issn={0026-2285},
   review={\MR{1852306}},
   doi={10.1307/mmj/1008719776},
}
\end{biblist}
\end{bibdiv}
\end{document}